\newcommand{\cH}{{\mathcal H}}
\newcommand{\cS}{{\mathcal S}} 
\newcommand{\cP}{{\mathcal P}}
\newcommand{\cL}{{\mathcal L}}
\newcommand{\bN}{{\mathbb N}}
\newcommand{\bS}{{\mathbb S}}
\newcommand\eref[1]{(\ref{#1})}
\newcommand{\bT}{{\mathbb T}}
\def\bR{{\mathbb{R}}} 
\newtheorem{theo}{Theorem}
\newtheorem{prop}{Proposition} 
\newtheorem{lemma}{Lemma}
\newtheorem{condition}{Condition}
\newtheorem{remark}{Remark}
\newenvironment{proof}[1][Proof]{\noindent\textbf{#1.} }{\ \rule{0.5em}{0.5em}}
\begin{document}

\title{Concentration Inequalities and Confidence Bands for Needlet Density Estimators on Compact Homogeneous Manifolds}

\author{\textsc{Gerard Kerkyacharian, Richard Nickl} and \textsc{Dominique Picard} \\ \\ \textit{LPMA, University of Cambridge and Universit\'{e} Paris Diderot}}

\date{This Version: February 2011, First Version: November 2010}

\maketitle

\begin{abstract}
Let $X_1,...,X_n$ be a random sample from some unknown probability density $f$ defined on a compact homogeneous manifold $\mathbf M$ of dimension $d \ge 1$. Consider a 'needlet frame' $\{\phi_{j \eta}\}$ describing a localised projection onto the space of eigenfunctions of the Laplace operator on $\mathbf M$ with corresponding eigenvalues less than $2^{2j}$, as constructed in \cite{GP10}. We prove non-asymptotic concentration inequalities for the uniform deviations of the linear needlet density estimator $f_n(j)$ obtained from an empirical estimate of the needlet projection $\sum_\eta \phi_{j \eta } \int f \phi_{j \eta }$ of $f$. We apply these results to construct risk-adaptive estimators and nonasymptotic confidence bands for the unknown density $f$. The confidence bands are adaptive over classes of differentiable and H\"{older}-continuous functions on $\mathbf M$ that attain their H\"{o}lder exponents. 

\textit{MSC 2000:} 62G07, 60E15, 42C40
\end{abstract}


\section{Introduction}

We consider the problem of constructing confidence bands for an unknown probability density $f$ based on a sample $X_1, . . . ,X_n$ from $f$ observed on the $d$-dimensional compact homogeneous manifold $\mathbf M$.  The classical statistical applications occur when $\mathbf M$ equals the $d$-dimensional unit sphere $\mathbb S^d$ of $\mathbb R^{d+1}$: If $d=1$ this corresponds to estimating a periodic univariate density, and recent interest lies mostly in the case $d=2$, strongly motivated by statistical problems in astrophysics, see \cite{density} for an account of typical problems and applications in astrophysics and directional statistics more generally. In \cite{density} a recent construction of wavelet type bases on $\mathbb S^d$ -- due to \cite{NPW, pnarco}, who called these new basis functions \textit{needlets} -- was employed to construct risk-adaptive estimators for $f(x), x \in \mathbb S^d,$ by a local needlet series with support concentrated in a neighborhood of $x$. See also \cite{spheredeconv} for similar results in the spherical deconvolution problem. The main advantages of this approach are that they share none of the drawbacks of classical approaches: kernel methods do not take the manifold structure of the sphere well into account, orthogonal series methods associated with spherical harmonics have very poor pointwise (and even worse uniform) performance since spherical harmonics are not well localized but spread out all over the sphere, and methods based on stereographic projections of the sphere onto the plane use a distorted approximation-theoretic paradigm. In contrast needlets are a tight frame constructed on the spherical harmonics which are highly localized and allow for optimal approximation not only in $L^2$ but in general $L^p$-spaces, including in particular $L^\infty$, which is particularly relevant in the problem of constructing confidence bands for $f$. Moreover the localization property is of crucial importance since in astrophysical data sets some parts of the sphere (sky) may not be covered by the observations, so that non-local procedures may suffer severely from missing data points. 

The main contributions of the present article are three-fold. First, building on recent results on wavelets and approximation of functions on manifolds in \cite{GM09, GP10}, we show how needlet estimators $f_n(j,y), y \in \mathbf M,$  with resolution level $j \ge 0,$ can be defined also on the more general class of compact homogeneous differentiable manifolds $\mathbf M$, which includes, next to $d$-dimensional unit spheres, also other relevant examples such as real and complex projective spaces, or Grassmann and Stiefel manifolds. The main idea behind this construction is to use tools from harmonic analysis on compact Lie groups that allow to build a localized frame on the eigenfunctions of a second order elliptic differential Laplace operator on $\mathbf M$, which in the case of the sphere coincides with the construction of \cite{NPW, pnarco}, where these eigenfunctions are precisely the spherical harmonics.

The second goal of this article is to prove non-asymptotic concentration inequalities for the uniform fluctuations $$\sup_{y \in \mathbf M}|f_n(j,y)-Ef_n(j,y)|$$ of needlet estimators $f_n(j)$ around the needlet projections $Ef_n(j)=A_j(f)$ of the unknown density $f$. The constants in these concentration inequalities depend in a natural way on the manifold and we derive reasonably tight constants for the case $\mathbf M = \mathbb S^d, d \ge 1$. We present both Bernstein-type bounds and inequalities based on Rademacher-symmetrization in a similar vein as in recent work in \cite{KolaAOS, GineNickl10b, ln10}.

The third goal is to use the above concentration inequalities to construct estimators and confidence bands for the unknown density $f:\mathbf M \to \mathbb R$. Even the problem of spherical confidence bands seems not to have been addressed in the literature so far -- one reason may arise from the fact that the classical approach in the univariate case (\cite{bros}) via extreme value theory does not straightforwardly generalise to sample spaces with a different geometric structure. Our concentration inequalities hold on arbitrary compact homogeneous manifolds and can be used directly to construct estimators and nonasymptotic confidence bands for the unknown density $f$ if one has apriori control of the approximation error of $f$ by its needlet projection $A_j(f)$ (the 'bias' of estimation), which by results in \cite{GP10} is equivalent to classical H\"{o}lderian smoothness conditions for $f$ on $\mathbf M$. 

Since knowledge of the bias is usually not available, the question of how to choose $j$ comes into sight, and to which extent \textit{adaptive} estimators and confidence bands can be constructed. It is known on the one hand (\cite{Lowconf}) that adaptive and honest confidence bands in nonparametric function estimation problems cannot exist over the entirety of the usual smoothness classes (in our case, H\"{o}lder-balls on $\mathbf M$). Recent work in this field, however, can be interpreted as a new way of looking at this problem: One can devise statistically relevant subsets of the usual smoothness function classes for which adaptive confidence bands \textit{do exist}. One example comes from shape constrained nonparametric regression, see, e.g., \cite{dumbgen}. Other examples are 'self-similar functions' that attain their H\"{o}lder exponent -- see \cite{Pictriboul} in the case of the Gaussian white noise model and regression framework and \cite{GineNickl10a} in density estimation on the real line. Moreover, building on \cite{jaffard}'s work on the Frisch-Parisi conjecture (\cite{FrisParisi}), \cite{GineNickl10a} proved that 'generic' subsets (in the Baire-sense) of the class of H\"{o}lder balls can be constructed for which asymptotically honest adaptive confidence bands exist. 

In the present paper we follow the line of  \cite{Pictriboul} and \cite{GineNickl10a}, but take a nonasymptotic approach. We propose an adaptive procedure $\hat j_n$ based on Lepski's method (\cite{Lepski})  to choose the resolution level $j$ for the needlet estimator $f_n(j)$ in a data-driven way. The resulting estimator $f_n(\hat j_n)$ adapts to the unknown smoothness of $f$ in sup-norm risk. In our main result we devise an analytic condition on the approximation errors of $f$ by its needlet projections $A_j(f)$ under which we can establish both an asymptotic and a nonasymptotic coverage result for confidence bands for $f$ over arbitrary subsets $\Omega$ of $\mathbf M$ that are centered at $f_n(\hat j_n)$, and we show that this band adapts to the unknown smoothness of $f$ in the minimax sense. Intuitively the results in \cite{GineNickl10a} suggest that adaptation is possible for functions $f: \mathbf M \to \bR$ that attain their H\"{o}lder exponent, and indeed we prove that our analytic condition can be interpreted in terms of classical H\"{o}lder regularity properties of $f$. The proof of this result is somewhat delicate and we detail it only in the case $\bS^d$, where the representation of the projector onto spherical harmonics in terms of Gegenbauer polynomials allows for explicit derivations. 

Let us finally remark that even in the univariate case $\mathbb S^1$ our nonasymptotic approach to confidence bands gives an alternative to the more classical asymptotic techniques based on extreme value theory, as initiated in the classical paper \cite{bros}, and as also used in the adaptive context in \cite{GineNickl10a}. Not surprisingly the results obtained via a nonasymptotic approach have limitations, but in contrast to the classical asymptotic theory referred to above, the present results give precise conditions for what is necessary to obtain coverage in finite samples.

\section{Compact Homogeneous Manifolds and Needlets} \label{need}

We summarize here some facts on compact homogeneous manifolds and Lie groups (see \cite{H78, H00, W83, Faraut} for general references), and the construction and essential properties of the associated needlet frame due to \cite{GM09, GP10}, generalising the spherical case considered in \cite{NPW}.

\subsection{Compact Lie Groups and the Laplace Operator}

Let $\bf M$ be a compact connected differentiable ($C^\infty$-) manifold of dimension $dim(\mathbf M)=d$. A compact Lie group $G$ of dimension $\tau$ is said to act on $\bf M$ via $$(g,x) \in G \times \mathbf {M} \mapsto g.x \in \bf M$$ if a) this action is, for every $g \in G$, a diffeomorphism of $\bf M$,  if b) $g_1g_2 . x = g_1 . (g_2 . x)$ holds for every $g_1, g_2 \in G, x \in \bf M$, if c) the identity $e \in G$ satisfies $e . x =x$ and if d) for every $g \in G, g \ne e,$ there exists a point $x \in \bf M$ such that $g. x \ne x$. A group $G$ acts \textit{transitively} on $\bf M$ if in addition $$\text{ for every } x,y \in \mathbf M ~ \text{ there exists } g \in G \text{ s.t. } g . x =y.$$ A compact manifold $\bf M$ is said to be \textit{homogeneous} if it is a compact connected differentiable manifold on which a compact Lie group acts transitively. Examples include the $d$-dimensional unit sphere $\mathbb S^d$ of $\mathbb R^{d+1}$, projective spaces, Stiefel and Grassmann manifolds, see p.125 in \cite{W83} and also \cite{W52} for the two-point homogeneous case.

Any compact homogeneous manifold $\mathbf M$ can be realised as a quotient $G/K$ where $K$ is a closed subgroup of $G$. More precisely, if we fix once and for all a point $x_0 \in \mathbf M,$ and let $K= \{h \in G, h.x_0 = x_0\}$ be the closed isotropy subgroup at $x_0$, then $\mathbf M$ is diffeomorphic to $G/K$
and the canonical projection $\pi : g \in G \mapsto \overline g=\{gh, h \in K\} \in G/K$ is  continuous, onto and verifies $ \pi (g_1 g_2)  =  g_1 \pi ( g_2) $, see \cite{W83}, p.123 onwards. Moreover the image of the Haar measure on $G$ under $\pi$,
$$ \int_{G} f( \pi(g)) d{g} = \int_{G/K} f(x) dx  = \int_\mathbf M f(x) dx,$$
is a natural "Haar" measure $dx$ on $\mathbf M$, invariant under the action of $G$. (It is the unique $G-$invariant measure on $\bf M$ up to a scaling factor.) The usual Lebesgue spaces on $\bf M$ are denoted by $L^p(\mathbf M):=L^p(\mathbf M, dx), 1 \le p \le \infty$. Since $G$ is compact, $dx$ is bi-invariant: for $f\in L^1(\mathbf M)$ and $g \in G$ let us define $L_g(f)(x) = f(g^{-1}x), R_g(f)(x)=f(xg),$ then $$ \int_\mathbf M L_g(f)(x) dx = \int_\mathbf M f(x) dx = \int_\mathbf M R_g(f)(x)dx.$$
The Lie algebra $Lie(G)$ of $G$ is characterized by the fact that
$$ X \in  Lie( G ) \mapsto e^{X} \in G,$$ and since $G$ is compact, this mapping is onto. Let us recall that we have the $Ad$ representation of $G$ in $Lie(G):$$$  g \in G \mapsto  Ad(g) X \equiv gXg^{-1}  \in Lie(G),  \; \hbox{and}\;    g e^X g^{-1} = e^{Ad(g) X},$$
and there exists an Euclidean structure $\langle \cdot, \cdot \rangle$ on $Lie(G)$ for which $Ad$ is unitary, that is, such that 
\begin{equation}\label{AD}
 \forall g \in G, \quad \forall X \in Lie(G), \quad \langle Ad(g) X, Ad(g) Y \rangle = \langle  X,  Y \rangle, \quad |X|^2 = \langle  X,  X \rangle,
 \end{equation}
see Proposition 6.1.1 in \cite{Faraut}. 

Every $X \in Lie(G)$ generates a vector field on $G$ so that we can define a one parameter group $$t \mapsto e^{tX} \in G, t \in \bR,$$
and since $G$ is connected we can define a metric on $G$ by the 'length' $|X|$ of the 'shortest geodesic' joining two points $g_1, g_2 \in G$, 
\begin{equation}\label{dG}
 d_G(g_1,g_2) = \inf \{|X|, \; e^X g_1 = g_2\} = \inf \{|X|, \;  g_1 e^X = g_2\}. 
\end{equation}
The two previous definitions are equivalent, as :
$$ e^X g_1 = g_2 \Longleftrightarrow   g_1 g_1^{-1} e^X g_1 = g_2\Longleftrightarrow   g_1   e^{Ad(g_1^{-1})X}  = g_2, \;   |Ad(g_1^{-1})X| = |X|$$
and it is not difficult to verify that this metric is bi-invariant :
$$ \forall g_1, g_2 , g \in G, \;
 d_G(g_1,g_2) =  d_G( gg_1, gg_2)  = d_G(g_1g ,g_2g). $$
Every $X \in Lie(G)$ also naturally generates a one parameter group on $\mathbf M:$
 $$ t  \in \bR \mapsto e^{tX}.x \in \mathbf M$$
which describes geodesics of the Riemannian structure on $\mathbf M$ associated to the Euclidean structure $\langle \cdot, \cdot \rangle$ on $Lie(G).$ The metric on $\mathbf M$ is given by
 $$ d_\mathbf M(x,y) = \inf\{ |X|, \; e^X.x =y\} = d_{G/H}(x,y)= \inf\{ d_G (g_1,g_2) , \; \pi(g_1) =x, \pi(g_2) =y \}$$
So $d_\mathbf M(\pi(g), \pi(g')) \leq d_G(g,g').$ Moreover 
$$\forall g \in G, \; x, y \in \mathbf M , \; d_\mathbf M( g.x, g.y) =  d_\mathbf M(x,y).$$ 
This is again due to (\ref{AD}) as 
$$   e^X . x =y \Longleftrightarrow    g. e^X g^{-1}. g . x = g. y \Longleftrightarrow e^{Ad(g) X} g.x = g.y, \; \hbox{ and } \; |X| =|Ad(g) X|.$$
Now similarly every $X \in Lie(G)$ gives rise to a one-parameter group on $L^p(\bf M)$, $1\leq p <\infty$, given by 
$$f \mapsto  T_t(f) (x) =f(e^{tX}.x);~ t \in \mathbb R, x \in \mathbf M,  f  \in L^p(\bf M)$$
and we denote the infinitesimal generator of this one-parameter group by $D_X$, so $$D_Xf(x) = \frac{d}{dt}f(e^{tX} . x)|_{t=0}, ~~x \in \mathbf M,$$ the derivative of $f$ at $x$ in the direction of the $X$-geodesic.

If $X_i, i=1, \dots, \tau$, is an orthonormal basis of $Lie(G)$ with respect to the scalar product induced by the adjoint representation, the sum $$\cL = \sum_{i=1}^\tau X_i^2$$ defines the Casimir operator, which is independent of the choice of the basis, and which is a central element of the enveloping algebra of $Lie(G).$ Associated to the Casimir operator is the following operator on $L^2(\mathbf M)$  (we keep the same notation $\cL$)
$$\mathcal L = D^2_{X_1} + D_{X_2}^2 + \dots + D_{X_\tau}^2.$$
The operator $-\mathcal L$, which is often called the Laplace operator, is a second order, positive,  elliptic differential operator defined on the space $C^\infty(\bf M)$  of infinitely differentiable functions on $\bf M$. In fact $-\mathcal L$ can be closed to give a positive,  self-adjoint second order elliptic differential operator on $L^2(\bf M)$ with a discrete spectrum of eigenvalues $\lambda_k, k \in \mathbb N,$ arranged in increasing and divergent order. By the spectral theorem the corresponding eigenfunctions $\{e_k\}_{k \in \mathbb N}$ constitute an orthonormal basis of $L^2(\bf M)$, and we define, for $n \in \mathbb N$, the closed finite-dimensional subspaces $E_n=E_n(\bf M)$ of $L^2(\bf M)$ spanned by eigenfunctions $e_k$ of $\mathcal L$ whose corresponding eigenvalues $\lambda_k$ do not exceed $n$, formally $$E_n(\mathbf M) := \left\{x \mapsto \sum_{k: \lambda_k \le n} c_k e_k(x): ~c_k \in \mathbb R,~ \lambda_k \text{ an eigenvalue of } e_k  \right\}.$$

\subsection{Connection to the Laplace-Beltrami Operator}

The operator $\mathcal L$ need not necessarily coincide with the Laplace-Beltrami operator on $\mathbf M$, but it does in several important cases. If $M$ is a two-point homogeneous space then $\cL$ equals, up to a scaling constant, the Laplace-Beltrami operator, see Proposition 4.11 in Chapter II of \cite{H00}. By \cite{W52}'s classification of such spaces this includes, among others, the $d$-dimensional unit sphere, real and certain complex projective spaces. Further examples for manifolds where the Laplace-Beltrami operator coincides with $- \mathcal L$ are given in \cite{GP10}. Since this connection is of some interest in applications, we discuss this point here in some more detail.

The Laplace operator $\cL$ is left- and right invariant and symmetric with respect to the inner product $\langle \cdot, \cdot \rangle$ induced by the adjoint representation, see p.162 in \cite{Faraut}. By the general theory of irreducible unitary representation of compact Lie groups (e.g., Theorem 6.4.1 and Proposition 8.2.1 in \cite{Faraut}) :
$$L^2(\mathbf M) = \bigoplus_j V_j, \; V_j = ker (\cL -c_j I)$$
for constants $c_j$, and $\forall g \in G, \; L_g(V_j) \subset V_j,$
$$ g \in G \mapsto L_g  \in Lin(V_j)$$ is a finite dimensional unitary irreducible representation of $G,$ where $Lin (V_j)$ denotes the space of bounded linear operators on $V_j$. 

Moreover, as a Riemannian manifold, $\mathbf M $ is equipped with a Laplace-Beltrami operator $\Delta$ which commutes with the $G-$ action:  $ \forall g \in G, \;  \Delta L_g= L_g \Delta.$ If $M$ is compact :
$$L^2(\mathbf M) = \bigoplus_k \cH_k, \quad \cH_k =ker (\Delta-\lambda_k I).$$
Moreover $\cH_k$ is $ G-$ invariant ($ \forall g \in G, \; L_g( \cH_k) \subset \cH_k $),  so 
$$ g \in G \mapsto L_g  \in Lin(\cH_k)$$
is a finite dimensional unitary  representation of $G.$ 

Clearly, if $\Phi_k(x,y)$ is the kernel of the projection operator onto $\cH_k,$ then $\phi_k(y) = \Phi_k(x_0,y)$  verifies $ \| \phi_k \|_2^2 = \phi_k(x_0) =dim(\cH_k)$ and is moreover a \textit{zonal} function
 (recall that $f$ is zonal if $ \forall h\in K, \; L_h(f) =f$, see, e.g., \cite{G75, H00}).  If the space of zonal funtions in $\cH_k$ is of dimension $1$ then 
 $ g \in G \mapsto L_g  \in Lin(\cH_k)$ is an irreducible representation. If this is the case for all $\cH_k$ then $\cL$ and the Laplace-Beltrami will coincide, if we can check that
 the eigenvalues are the same.
 

Let us illustrate this in the case of $\mathbf M = \mathbb S^d$, where $$G= SO(d+1) = \{A \in M(d+1\times d+1), \; A^{-1}= A^t\},$$ $$ \quad  Lie(G) =so(d+1)=  \{X \in M(d+1\times d+1), \; -X= X^t\}$$
and we can take $$ \langle X,Y\rangle = \frac 12 Tr (XY^t).$$ An orthonormal basis is then given by $$ X_{i,j} =E_{i,j}- E_{j,i}, \;  1\leq i<j \leq d+1, \quad E_{j,i}=(\alpha^{i,j}_{k,l})_{k,l}, \; \alpha^{i,j}_{k,l}=\delta_{i,k} \delta_{j,l} .$$
We take $x_0= (1,0, \dots, 0)$ so $K \approx SO(d)$ and 
$$ \forall x,y \in  M= \mathbb S^d, \; d_{ \mathbb S^d} (x,y) = \arccos (\langle x, y \rangle_{\mathbb R^{d=1}})$$
The eigenvalues  of $\Delta $ are $ \lambda_k = -k(k+d-1)$ (\cite{Faraut}), the space $\cH_k$ equals the space of spherical harmonic functions of degree $k$, and there is only one zonal function in each $\cH_k$ (which is given through Gegenbauer polynomials) so the induced representation are irreducible (and not equivalent). To see that $\Delta =-\cL$ it is enough to compute the eigenvalue of $\cL$ on $\cH_k$ and this can be carried on in the case of the sphere using the explicit expression of $\cL =-\sum_{i<j} D_{X_{i,j}}^2.$

\subsection{A Smoothed Projection onto the Span of the Eigenfunctions of $-\mathcal L$} \label{sh}

We shall write $\langle g, h \rangle$ from now on for the standard $L^2(\bf M)$-inner product of two functions $g,h \in L^2(\mathbf M):=L^2(\mathbf M, dx)$. We also denote by $\|g\|_\Omega=\sup_{y \in \Omega}|g(y)|$ the supremum norm of $g: \bf M \to \mathbb R$ over $\Omega \subseteq \bf M$, and we shall write $\|g\|_\infty$ when $\Omega = \bf M$.

Let $0 \leq a \leq 1$ be an infinitely differentiable  nonnegative function defined on $[0, \infty)$. We require $a$ to be identically $1$ on $[0, 1/2]$ and compactly supported on $[0,1].$ Define the sequence of linear operators $A_j,\; j\ge 0,$ with $$A_0f = \int_\mathbf M f(x) dx, \quad A_jf(x) := A_j (f) (x) =  \int_\mathbf M A_j(x,y) f(y) dy, ~~j > 0,$$ where, for $L_k(x,y)=e_k(x) \overline{e_k(y)}$,
 $$A_j(x,y):=  \sum_{k} a\left(\frac{\lambda_k}{2^{2j}}\right)  L_k(x,y) = \sum_{k: \lambda_k <2^{2j}} a\left(\frac{\lambda_k}{2^{2j}}\right)  e_k(x) \overline {e_k(y)}.$$
Clearly
\begin{equation*}
\langle A_jf,f \rangle = \sum_{k} a\left(\frac {\lambda_k}{2^{2j}}\right) \langle  L_k f, f \rangle \leq   \| f\|_2^2, \quad \|A_jf\|_2 \le \|f\|_2
\end{equation*}
from Parseval's identity and since $|a|\le 1$. Since $a$ is identically one on $[0,1/2]$
\begin{equation} \label{polyid}
h \in E_{2^{2j-1}}(\mathbf M) \text{ implies }A_j(h)=h
\end{equation}
and since $E_n(\mathbf M), n \ge 1,$ is dense in $L^2(\bf M)$ we conclude
\begin{equation*}
{\label{pro2}}
\lim_{j\rightarrow \infty } \| A_jf-f\|_2=0 
\end{equation*}
for every $f \in L^2(\bf M)$. Thus $A_j$ furnishes us with an approximation of the identity operator on $L^2(\bf M)$. 

The kernel $A$ can be 'split' as follows: If we define
\begin{align*}C_{j} (x,y)&= \sum_{k: \lambda_k <2^{2j}}   \sqrt {a \left(\frac{\lambda_k}{2^{2j}}\right)} L_k(x,y)\; \end{align*}
then due to the orthogonality properties of the $L_k$'s we see
\begin{equation} \label{aj}
 A_{j}(x,y)=\int_\mathbf M C_{j} (x,u) C_{j} (u,y) du.
\end{equation}

\subsection{Gauss Cubature Formula and Needlets on a Manifold} \label{qua}
 
The following quadrature formula holds on $E_{k}(\bf M)$, see Theorem 5.3 in \cite{GP10}. For every $k\in\mathbb{N}$ there exists a finite subset $\chi_{k}$ of $\bf M$ of cardinality $|\chi_k| \le C k^{d/2}$ and positive real numbers $b_{\eta}:=b_{\eta k}>0$, indexed by the elements $\eta$ of
$\chi_{k},$ such that
\begin{equation}
\forall f\in E_{k}(\mathbf M),\qquad\int_{\mathbf M} f(x)\,dx=\sum_{\eta\in\mathcal{X}_{k}}b_{\eta}f(\eta)\ .  \label{quadr}
\end{equation}
The kernel $C_{j}$ defined above clearly satisfies $z\mapsto C_{j}(x,z)\in E_{2^{2j}}(\mathbf M)$ for every $x \in \mathbf M$, and Theorem 6.1 in \cite{GP10} states that 
\begin{equation} \label{add}
f,g \in E_n(\mathbf M) \Rightarrow fg \in E_{4\tau n}(\mathbf M),
\end{equation}
so we deduce $z\mapsto C_{j}(x,z)C_{j}(z,y)\in E_{\tau 2^{2j+2}}(\mathbf M)$. Note that it is property (\ref{add}) where homogeneity of the manifold is used crucially. It is in the same spirit as (but not equivalent to) the addition formula for eigenfunctions of the Laplace-Beltrami operator on a Riemannian manifold (see \cite{G75}). Combining (\ref{aj}) with (\ref{quadr}) thus implies
\begin{equation*}
A_{j}(x,y)=\int_\mathbf M C_{j}(x,z)C_{j}(z,y)dz=\sum_{\eta\in\chi
_{\tau 2^{2j+2}}}b_{\eta}C_{j}(x,\eta)C_{j}(\eta,y)\ 
\end{equation*}
and the action of $A_j$ on $L^2(\mathbf M)$ can hence be represented as
\begin{align*}
A_{j}f(x) & =\int_\mathbf M A_{j}(x,y)f(y)dy=\int_\mathbf M \sum_{\eta \in\chi_{\tau 2^{2j+2}}}b_{\eta}C_{j}(x,\eta)C_{j}(\eta,y)f(y)dy \\
& =\sum_{\eta\in\chi_{\tau 2^{2j+2}}}\sqrt{b_{\eta}}C_{j}(x,\eta)\int_{\mathbf M} {\sqrt{b_{\eta}}C_{j}(\eta,y)}f(y)dy.
\end{align*}
This motivates the definition of the \textit{needlet scaling function} $\phi_{j \eta}$ indexed by the cubature points $\eta \in \mathcal Z_j$, 
\begin{equation*}
\phi_{j\eta}(x):=\sqrt{b_{\eta}}\,C_{j}(x,\eta); \qquad \eta\in\mathcal
{Z}_j \equiv \chi_{\tau 2^{2j+2}}.
\end{equation*}
With this notation we can write 
\begin{equation} \label{needpro}
A_jf(x)= \sum_{\eta \in \mathcal Z_j} \langle \phi_{j \eta},f \rangle \phi_{j \eta} (x),
\end{equation}
and call this approximation the \textit{needlet projection} of $f$ onto $E_{\tau 2^{2j+2}}(\bf M)$ at resolution level $j$. 

We shall need below the following estimates on the cubature set, see \cite{GP10}
\begin{equation}\label{card}
\quad\tfrac 1{k_1}\frac 1{2^{dj}}\le b_{\eta j} \le k_1  \frac 1{2^{dj}}~~~\forall \eta \in \mathcal Z_j, \quad\tfrac 1{k_2} \, 2^{dj}\le  |\mathcal {Z}_j| \le {k_2}\, 2^{dj}
\end{equation}
for some explicit constants $k_1, k_2>0$.

Although we shall not explicitly use it in what follows, we can telescope the needlet projections in the usual way to obtain a wavelet-type multiresolution approximation $$A_jf= A_0f + \sum_{0 \le l \le j-1} \sum_\eta \langle f, \psi_{l \eta}\rangle \psi_{l\eta}$$ of a function $f$ on a compact homogeneous manifold by needlets $$\psi_{l \eta}(x) = \sqrt{b_{\eta l}} \sum_m c(\lambda_m/2^{2l})L_k(x, \eta), ~~\eta \in \mathcal Z_l,$$ with $c(y)=\sqrt{a(y/2)-a(y)}$. See Section 8 of \cite{GP10} for details. In particular
\begin{equation*}
f \in L^2(\mathbf M) \Rightarrow \left\|f-  \sum_{l\leq j} \sum_{\eta \in  \mathcal Z_l}  \langle f , \psi_{l \eta}  \rangle  \psi_{l \eta},\right\|_2 \to 0 ~\text{ as } j \to \infty,
\end{equation*}
and the $(\psi_{j \eta})$'s form a tight frame of $L^2(\bf M)$:
\begin{equation}\label{FRA}
 \forall  f \in L^2(\mathbf M), \quad \|f\|_2^2 = \sum_j \sum_{\eta} | \langle f , \psi_{j \eta} \rangle |^2.
\end{equation}

\subsection{Properties of the Needlet Frame}

We establish some key properties of needlets, including their near-exponential localization property.
 
\begin{prop} \label{needp}We have, for some constant $0<D_1(\mathbf M)<\infty$ and every $j \ge 0, \eta \in \mathcal Z_j,$
\begin{equation} \label{subonb}
 \|\phi_{j \eta}\|_2 \le 1,  \quad \|\phi_{j \eta}\|_\infty \le D_1(\mathbf M) 2^{jd/2}.
\end{equation}
Moreover, for every $x \in \mathbf M, \eta \in \mathcal Z_j$ and every $N \in \mathbb N$ there exists a constant $c_N$ such that 
\begin{equation} \label{ndl}
|\phi_{j \eta}(x)| \le \frac{c_N 2^{jd/2}}{(1+2^{jd}d_\mathbf M(\eta,x))^N}.
\end{equation}
\end{prop}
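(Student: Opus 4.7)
The plan is to address the three bounds in increasing order of difficulty, since the localisation bound \eref{ndl} implies the sup-norm part of \eref{subonb} by specialisation and sets the constants for everything else.

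First I would dispatch the $L^2$ bound in \eref{subonb} via a Bessel-type inequality hidden in \eref{needpro}. Because $0\le a\le 1$, the spectral multiplier $A_j$ has operator norm at most one on $L^2(\mathbf M)$, so $\langle A_jf,f\rangle\le\|f\|_2^2$, and combining this with the needlet identity $A_jf=\sum_\eta\langle\phi_{j\eta},f\rangle\phi_{j\eta}$ gives
\[
\sum_\eta|\langle\phi_{j\eta},f\rangle|^2\le\|f\|_2^2 \qquad\text{for every } f\in L^2(\mathbf M).
\]
Specialising to $f=\phi_{j\eta_0}$ and retaining only the diagonal term $\eta=\eta_0$ yields $\|\phi_{j\eta_0}\|_2^4\le\|\phi_{j\eta_0}\|_2^2$, hence $\|\phi_{j\eta_0}\|_2\le 1$. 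The sup-norm estimate in \eref{subonb} then follows from \eref{ndl} at $x=\eta$, so the crux is to establish \eref{ndl}.

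For \eref{ndl}, the strategy is to view $C_j$ as the kernel of the spectral operator $F_j(-\mathcal L)$, where $F_j(\lambda)=\sqrt{a(\lambda/2^{2j})}$ is a $C^\infty$ multiplier compactly supported in $\lambda\le 2^{2j}$. Rapid spatial decay of the kernel of a smooth band-limited function of an elliptic self-adjoint operator is classical. Concretely, I would use the cosine-wave representation
\[
F_j(-\mathcal L)(x,y)=\int_{\mathbb R}\widehat{F_j}(t)\,\cos\bigl(t\sqrt{-\mathcal L}\bigr)(x,y)\,dt
\]
together with finite speed of propagation on $\mathbf M$, which annihilates the integrand for $|t|<d_\mathbf M(x,y)$. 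The $C^\infty$ smoothness of $F_j$, together with its scaling by $2^{2j}$, makes $\widehat{F_j}$ decay faster than any polynomial after renormalisation, producing the arbitrary polynomial order $N$. Combining this decay with the volume factor $\sqrt{b_\eta}\le\sqrt{k_1}\,2^{-jd/2}$ from \eref{card} gives the bound in \eref{ndl}. A parallel route replaces the wave propagator by subordination to the heat semigroup $e^{s\mathcal L}$, using Gaussian upper bounds for the heat kernel of $\mathbf M$.

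I expect the main obstacle to lie precisely in \eref{ndl}: carrying out the decay estimate quantitatively on a \emph{general} compact homogeneous manifold requires both a clean form of finite speed of propagation and careful bookkeeping of the constants in the Fourier decay of $\widehat{F_j}$, matters developed in \cite{GM09, GP10}. For $\mathbf M=\mathbb S^d$ one has an explicit shortcut via the Gegenbauer polynomial representation of zonal projection kernels, but in the general setting I would invoke the relevant localisation lemma of \cite{GP10} essentially as a black box and only verify in the write-up that the $\sqrt{b_\eta}$ normalisation via \eref{card} reproduces the $2^{jd/2}$ prefactor and the scaling in the denominator that appear in \eref{ndl}.
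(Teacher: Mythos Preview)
Your proposal is correct, and for the localisation estimate \eref{ndl} and the resulting sup-norm bound it follows the same path as the paper: both invoke the kernel localisation lemma of \cite{GM09} for smooth compactly supported spectral multipliers of $-\mathcal L$ (what you describe via the cosine-wave representation and finite propagation speed is precisely the mechanism behind that lemma), and then absorb $\sqrt{b_\eta}\le \sqrt{k_1}\,2^{-jd/2}$ from \eref{card}.

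Your argument for $\|\phi_{j\eta}\|_2\le 1$, however, is genuinely different. The paper proceeds by a direct computation: from orthogonality of the $e_k$ one has $\int_\mathbf M \phi_{j\eta}^2= b_\eta\sum_k a(\lambda_k/2^{2j})L_k(\eta,\eta)$, and then the cubature formula together with positivity of the weights is used to show $b_\eta\le \bigl(\sum_{k:\lambda_k<2^{2j}}L_k(\eta,\eta)\bigr)^{-1}$, whence the bound. Your route instead exploits that $\langle A_jf,f\rangle=\sum_\eta|\langle\phi_{j\eta},f\rangle|^2\le\|f\|_2^2$ and specialises to $f=\phi_{j\eta_0}$, which is shorter and uses only the frame-type identity \eref{needpro} plus $0\le a\le1$. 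The paper's approach has the minor advantage of yielding the sharper intermediate inequality on $b_\eta$ (potentially useful elsewhere), while yours avoids any appeal to the quadrature structure for this particular bound. One small wording quibble: the sup-norm part of \eref{subonb} follows from \eref{ndl} for \emph{all} $x$ (the right-hand side never exceeds $c_N 2^{jd/2}$), not just at $x=\eta$.
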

\begin{proof} 
For the first inequality in (\ref{subonb}), let $\eta \in \bf M$, $n \in \mathbb N$ and note $$\int_{\mathbf M} \left(\sum_{k: \lambda_k \leq n} L_k(x, \eta)\right)^2 dx = \sum_{k: \lambda_k \leq n}  L_k(\eta, \eta).$$ On the other hand $$x \mapsto \left(\sum_{k: \lambda_k \leq n} L_k(x, \eta)\right)^2 \in E_{4\tau n}(\mathbf M),$$ so if $\chi_{4\tau n}$ is the set of cubature points of $E_{4 \tau n}(\mathbf M)$ and $\eta \in \chi_{4\tau n}$
\begin{eqnarray*}
\int_\mathbf M \left(\sum_{k: \lambda_k \leq n} L_k(x, \eta)\right)^2 dx = \sum_{\xi \in \chi_{4\tau n}} b_\xi \left(\sum_{k: \lambda_k \leq n} L_k(\xi, \eta)\right)^2 \ge b_\eta \left(\sum_{k: \lambda_k \leq n} L_k(\eta, \eta)\right)^2.
\end{eqnarray*}
so, combining these estimates,
$$b_\eta \le \frac{1}{\sum_{k: \lambda_k < n} L_k(\eta, \eta)}$$
for every $\eta \in \chi_{4dn}$. This implies, for every $\eta \in \mathcal Z_j$,
\begin{eqnarray*}
\int_{\mathbf M} \phi^2_{j \eta}(x)dx = b_\eta \sum_{k: \lambda_k < 2^{2j}} a (\lambda_k/2^{2j}) L_k(\eta, \eta) \le 1.
\end{eqnarray*}
To prove the remaining claims, recall that by definition $$\phi_{j \eta}(x) = \sqrt {b_\eta} \sum_{k: \lambda_k < 2^{2j}} \sqrt {a (\lambda_k/2^{2j})} L_k(x,y).$$ 
For $f$ a function from the Schwartz-class on $\mathbb R^+$, Lemma 4.1 (and the remark after it) in \cite{GM09},  applied to the elliptic operator $f(\mathcal L/2^{2j})$ (notation of functional calculus, $t=2^{-2j}$ in their lemma), proves that for every integer $N \ge 0$ there exists a constant $c_N(f)$ such that 
 \begin{equation}\label{GM}
 \sum_{k: \lambda_k < 2^{2j}} f (\lambda_k/2^{2j}) L_k(x,\eta) \le \frac{c_N(f) 2^{jd}}{( 1+2^{jd}d(\eta,x))^N}.
 \end{equation} 
Applying this to $f= \sqrt{a},$  we infer the second bound in (\ref{subonb})  and (\ref{ndl}) follows from (\ref{card}) and (\ref{GM}).
\end{proof}


%

\medskip


%
\begin{prop} \label{weyl}
We have 
\begin{equation}
\sup_{x \in \mathbf M} \int_\mathbf M A^2_j(x,y)dy \le D_2(\mathbf M) 2^{jd},~~~ \sup_{x,y \in \mathbf M}|A_j(x,y)|\le D_2(\mathbf M) 2^{jd}
\end{equation}
for some finite positive constant $D_2(\mathbf M)$ that depends only on the manifold.
\end{prop}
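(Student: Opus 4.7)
The plan is to leverage two ingredients already assembled in the proof of Proposition \ref{needp}: the near-exponential localization estimate (\ref{GM}), borrowed from Lemma 4.1 of \cite{GM09}, and the orthonormality of the eigenfunction basis $\{e_k\}$ in $L^2(\mathbf M)$.

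I would first establish the pointwise bound by applying (\ref{GM}) directly to $f=a$, instead of to $f=\sqrt{a}$ as was done for Proposition \ref{needp}. Since by hypothesis $a$ is smooth and compactly supported on $[0,\infty)$, it lies in the Schwartz class on $\mathbb R^+$, and (\ref{GM}) yields
$$|A_j(x,y)| \;=\; \left|\sum_{k:\lambda_k<2^{2j}} a(\lambda_k/2^{2j})\,L_k(x,y)\right| \;\le\; \frac{c_N(a)\,2^{jd}}{(1+2^{jd}d_{\mathbf M}(x,y))^N}$$
for every $N\in\mathbb N$. Specialising to $N=0$ gives the second claim at once, with constant $D_2(\mathbf M):=c_0(a)$ uniform in $x,y\in\mathbf M$.

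For the integral bound I would expand $y\mapsto A_j(x,y)$ in the orthonormal basis and apply Parseval. Since the coefficients of $A_j(x,\cdot)$ against $\overline{e_k}$ are precisely $a(\lambda_k/2^{2j})\,e_k(x)$, orthonormality yields
$$\int_{\mathbf M} A_j^2(x,y)\,dy \;=\; \sum_{k:\lambda_k<2^{2j}} a^2(\lambda_k/2^{2j})\,|e_k(x)|^2.$$
The key (and only) trick is now the elementary observation that $0\le a\le 1$ implies $a^2\le a$, so the right-hand side is dominated by
$$\sum_{k:\lambda_k<2^{2j}} a(\lambda_k/2^{2j})\,|e_k(x)|^2 \;=\; A_j(x,x) \;\le\; D_2(\mathbf M)\,2^{jd},$$
by the pointwise bound just derived. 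Taking $\sup_x$ completes the proof.

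I do not anticipate any genuine obstacle. The one conceptual point worth flagging is the reduction of the $L^2$-diagonal integral to the on-diagonal value $A_j(x,x)$ via $a^2\le a$; without this observation one would be tempted to integrate the pointwise decay $(1+2^{jd}d_{\mathbf M}(x,y))^{-N}$ directly against $dy$, which works but requires an auxiliary volume-growth estimate for balls in $\mathbf M$ that the present route sidesteps entirely.
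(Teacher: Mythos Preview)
Your proof is correct and follows essentially the same architecture as the paper: the pointwise bound via (\ref{GM}) with $f=a$, and the $L^2$-integral via orthonormality, reducing $\int A_j^2(x,y)\,dy$ to the diagonal sum $\sum_k a^2(\lambda_k/2^{2j})\,L_k(x,x)$.

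The one minor divergence is in how that diagonal sum is then bounded. The paper simply applies (\ref{GM}) a second time, now with $f=a^2$ (which is again smooth and compactly supported), to obtain $\sum_k a^2(\lambda_k/2^{2j})\,L_k(x,x)\le c_0(a^2)\,2^{jd}$ directly. You instead use $a^2\le a$ to dominate by $A_j(x,x)$ and recycle the pointwise bound. Both routes are valid; yours is marginally more economical in that it calls on (\ref{GM}) only once, while the paper's version is marginally more direct in that it does not need the hypothesis $a\le 1$ at this step. Neither gains anything of substance over the other.
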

\begin{proof}
As  $A_j(x,y):=  \sum_{k} a(\lambda_k/2^{2j})  L_k(x,y)$, the second claim follows from (\ref{GM}) with $f=a$. For the first 
$$\int_\mathbf M A^2_j(x,y)dy= \int_\mathbf M  \sum_{k,l} a\left(\frac{\lambda_k}{2^{2j}}\right)  L_k(x,y)  \overline{  a\left(\frac{\lambda_l}{2^{2j}}\right)   L_l(x,y)}   dy$$
$$=   \sum_{k}a^2\left(\frac{\lambda_k}{2^{2j}}\right)  L_k(x,x) $$
and again using (\ref{GM}) with $f=a^2$ gives the result.
%
\end{proof}

\subsection{The case of $\bS^d$}
 
In the case of the $d$-dimensional unit sphere $\mathbb S^d$ of $\mathbb R^{d+1}$ the above construction is effectively the one in \cite{NPW}. On $\mathbb S^d$ the differential operator $\mathcal L$ coincides with the usual Laplace-Beltrami operator, and we have
$$ L^2(\mathbb S^d) = \bigoplus_k \cH_k, \quad \cH_k \equiv \cH_k(\mathbb S^d) =ker (\Delta-\lambda_k I),  \;  \lambda_k= -k(k+d-1).$$
The eigenfunctions $e_k$ in this case are the spherical harmonics with eigenvalues $k(k+d-1)$ (e.g., Proposition 9.3.5 in \cite{Faraut}). Thus if we take the subsequence $N\equiv N_k$ of $\mathbb N$ for which $k(k+d-1)=N_k$ as $k$ runs through the nonnegative integers, then the spaces $E_N(\mathbb S^d)$ correspond to the spaces $\mathcal P_N(\bS^d)$ of spherical polynomials of degree less than or equal to $N$, which are spanned by the mutually orthogonal spaces $\cH_k(\bS^d), 0 \le k \le n,$ of spherical harmonics, see \cite{Faraut, STW}. 

If  $\{e_i^k\}$ is any orthonormal basis of $\cH_k,$ then we write, in slight abuse of notation, $$L_k(x,y) = \sum_i e_i^k(x) \overline{e_i^k(y)}= L_k(\langle x,y\rangle_{d+1}) , \quad  \langle x,y\rangle_{d+1}= \sum_{i=1}^{d+1} x_i y_i$$ $$ |\bS^d| L_k(u) = \left(1+ \frac k\nu\right) C^\nu_k (u), \quad \nu = \frac{d-1}2, \quad u\in[-1,1]$$ where $C_k^\nu$ is the corresponding Gegenbauer polynomial, and $|\bS^d|$ is the Lebesgue measure of $\bS^d$, i.e., $|\bS^d| = \int_{\bS^d}dx = (2\pi^{(d+1)/2})/\Gamma( (d+1)/2).$ We have furthermore (p.144 in \cite{STW}) for every $x \in \bS^d,$ $$\sum_i|e^k_i(x)|^2dx = \frac{dim(\cH_k(\bS^d))}{|\bS^d|}$$ and thus 
\begin{equation} 
\label{ldim} |\mathbb S^d| L_k(1) = \dim(\cH_k(\bS^d)).
\end{equation} 
Moreover, for $d \ge 2$ and any $n \in \mathbb N$, $\cP_n(\bS^d) = \bigoplus_{k=0}^n \cH_k(\bS^d)$ and as a consequence, by \cite{STW}, $$dim(\cH_k(\bS^d)) =   C_{k+d}^d-  C_{k-2+d}^d  =  \frac{(d+k-2)! (d+2k-1)}{  k! (d-1)!}$$$$dim(\cP_n(\bS^d) )= C_{n+d}^d+  C_{n+d-1}^d= \frac 2{d!}(n+1)(n+2)..(n+d-1)(n+ \frac d2)=$$ $$\frac 2{d!} n^d \left(1+\frac 1n\right)\left(1+\frac 2n\right) ....\left(1+\frac{d- 1}n\right)\left(1+\frac d{2n}\right) =n^d \left(\frac 2d+\frac 1n \right)   \prod_{j=1}^{d-1} \left(\frac 1j+ \frac 1n \right).$$ So, for $d \geq 2,  n\geq 2$, $$\frac 2{d!} (n+ 1)^d  \leq   dim(\cP_n(\bS^d) )\leq n^d \left(\frac{n+1}n\right)^2 $$ 
$$\frac 2{d!} n^d  \leq   dim(\cP_{n-1}(\bS^d) )\leq n^d ~~\text{ and }~~ dim(\cP_{n-1}(\bS^1))= n.$$
By virtue of these bounds the constants in Proposition \ref{weyl} can be explicitly calculated. To obtain a unified notation define, for $j \in \mathbb N$, the integers $k(j)=\max\{k \in \mathbb N: \lambda_k=k(k+d-1) <2^{2j} \}$ so that $k(j)<2^j$ always holds. Then
\begin{eqnarray*}
\int_{\mathbb S^d} A^2_j(x,y)dx &=& \sum_{k: \lambda_k < 2^{2j}}  [a(\lambda_k/2^{2j})]^2  L_k(1) =  \frac 1{|\bS^d|}\sum_{k: \lambda_k <2^{2j}}    [a(\lambda_k/2^{2j})]^2  dim (\cH_k(\bS^d))  \\
&\leq& \frac{dim (\cP_{k(j)}(\bS^d))}{|\bS^d|}   \le \frac {2^{jd}}{|\bS^d|},
\end{eqnarray*}
and these inequalities imply that the same bound holds for $|A_j(x,y)|$. We can also deduce, as in the proof of Proposition \ref{needp}
$$ \| \phi_{j\eta}\|_\infty = \sqrt{b_\eta}  \sum_{k: \lambda_k < 2^{2j}} \sqrt{ a(\lambda_k/2^{2j})}  L_k(1) \leq  \sqrt{\sum_{k: \lambda_k < 2^{2j}} L_k(1)} \le  \sqrt{\frac{2^{jd}}{|\bS^d|}} .$$ Conclude that the key constants $D_1(\mathbf M), D_2(\mathbf M)$ in the last subsection can be taken to be 
\begin{equation}\label{const}
D_1(\mathbb S^d) = \sqrt{\frac{1}{|\bS^d|}},\quad D_2(\mathbb S^d) = \frac{1}{|\mathbb S^d|}
\end{equation}
in the case of the unit sphere. Finally we should remark that in the case of the unit sphere the addition formula (\ref{add}) holds with $4\tau n$ replaced by $2n$  as one is multiplying spherical polynomials. [Indeed whenever the Laplace-Beltrami operator coincides with $\mathcal L$ one can use the addition formula for eigenfunctions of the Laplacian in \cite{G75}.] Moreover, if $d=2$, for each resolution level $j$, the HEALPix pixelisation (commonly used for astrophysical data) gives $12 \cdot 2^{2j}$ cubature points, so $k_2=12$ in (\ref{card}).

\section{Linear Needlet Density Estimators and Concentration Properties of their Uniform Fluctuations} \label{linsup}

Let $X,X_1,...,X_n$ be i.i.d.~random variables taking values in a compact homogeneous manifold $\bf M$ of dimension $d$. Denote their common law by $P$ and assume that $P$ possesses a density $f: \mathbf M \to [0, \infty)$ w.r.t.~$dx$ on $\bf M$. Denote further by $P_n= \frac{1}{n}\sum_{i=1}^{n} \delta_{X_i}$ the empirical measure of the sample. Let $A_j(x,y)$ be the needlet projection kernel. For $j \in \mathbb N$, the linear needlet density estimator of $f$ is defined as
\begin{equation} \label{est}
f_n(j,y) = \frac{1}{n} \sum_{i=1}^n A_j(X_i,y) = \int_{\mathbf M} A_j(x,y)dP_n(x), ~~~~~~~ y \in \mathbf M.
\end{equation}
We shall often write, in slight abuse of notation, $f_n(j)$ for $f_n(\cdot,j)$.

\subsection{A Bernstein-type Concentration Inequality for Needlet Estimators}

We define now some quantities that measure the 'Gaussian' and 'Poissonian' fluctuations of the uniform deviations of the centered estimator $f_n(j)$. Recall the explicit constants $D_1(\mathbf M), |\mathcal Z_j| \le k_2 2^{jd}$ from (\ref{card}), (\ref{subonb}) in the previous section. Note moreover that the second estimate in Proposition \ref{needp} immediately implies  
\begin{eqnarray} \label{cnull}
2^{jd/2}c_0(\mathbf M,j) \equiv \sup_{x \in \mathbf M} \sum _{\eta \in\mathcal {Z}_j }| \phi_{j\eta}(x)| \le 2^{jd/2}C(\mathbf M).
\end{eqnarray} 
The constant $c_0(\mathbf M,j)\equiv c_0(\mathbf M, j, a, \mathcal Z_j)$ (or an upper bound for it) can be computed explicitly after the regularizing function $a$ and the quadrature set $\mathcal Z_j$ have been chosen, and a sharp numerical evaluation of it is important in application of Proposition \ref{pisneed} below. 

Define then
\begin{eqnarray*} \label{sigma}
\bar \sigma(n,l,x)&:=& \bar \alpha(x,l)\sqrt{\frac{2^{ld}}{n}} +  \bar \alpha'(x,l)\frac{2^{ld}}{n}
\end{eqnarray*}
where $$\bar \alpha(x,l):=\bar \alpha(\mathbf M,f,x,l):=c_0(\mathbf M,l)\sqrt{2(\log (2|\mathcal Z_l|)+x) \|f\|_\infty}$$ and $$\bar \alpha'(x,l):=\bar \alpha'(\mathbf M,x,l):= c_0(\mathbf M,l)\frac{2}{3} D_1(\mathbf M)(\log (2|\mathcal Z_l|)+x).$$
We now prove the following concentration inequality for the needlet density estimator. 
\begin{prop} \label{pisneed}
Let $\mathbf M$ be a compact homogeneous manifold and suppose $f: \mathbf M \to [0,\infty)$ is bounded. We have, for every $n \in \mathbb N$, every $j \in \mathbb N$ and every $x \ge 0$
\begin{equation*}
\Pr \left \{\sup_{y \in \mathbf M} \left|f_n(j,y)-Ef_n(j,y)\right| \ge \bar \sigma(n,j,x) \right\} \le e^{-x}.
\end{equation*}
\end{prop}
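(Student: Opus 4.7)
The plan is to exploit the needlet expansion of the kernel $A_j(x,y)$ to turn the supremum over the manifold into a finite maximum over the cubature set $\mathcal Z_j$, and then apply Bernstein's inequality to each cubature coefficient.

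First I would write, using (\ref{aj}), the quadrature formula (\ref{quadr}), and the definition of $\phi_{j\eta}$, the identity
$$A_j(x,y) = \sum_{\eta \in \mathcal Z_j} \phi_{j\eta}(x)\phi_{j\eta}(y),$$
so that
$$f_n(j,y) - Ef_n(j,y) = \sum_{\eta \in \mathcal Z_j} \phi_{j\eta}(y)\, Z_\eta, \qquad Z_\eta := \tfrac{1}{n}\sum_{i=1}^n\bigl(\phi_{j\eta}(X_i) - E\phi_{j\eta}(X_1)\bigr).$$
Pulling the maximum over $\eta$ out and using the definition (\ref{cnull}) of $c_0(\mathbf M,j)$ gives the deterministic bound
$$\sup_{y\in\mathbf M}\bigl|f_n(j,y)-Ef_n(j,y)\bigr| \;\le\; \Bigl(\sup_{y\in\mathbf M}\sum_{\eta \in \mathcal Z_j} |\phi_{j\eta}(y)|\Bigr) \max_{\eta \in \mathcal Z_j}|Z_\eta| \;\le\; c_0(\mathbf M,j)\, 2^{jd/2}\, \max_{\eta \in \mathcal Z_j}|Z_\eta|.$$

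Second, I would apply the classical Bernstein inequality to each $Z_\eta$. The two inputs are the uniform bound $\|\phi_{j\eta}\|_\infty \le D_1(\mathbf M)\,2^{jd/2}$ from (\ref{subonb}), and the variance bound
$$\mathrm{Var}(\phi_{j\eta}(X_1)) \le E\phi_{j\eta}^2(X_1) = \int_\mathbf M \phi_{j\eta}^2(x)f(x)\,dx \le \|f\|_\infty\,\|\phi_{j\eta}\|_2^2 \le \|f\|_\infty,$$
again using (\ref{subonb}). Bernstein's inequality then yields, for each $\eta$ and every $y > 0$,
$$\Pr\Bigl(|Z_\eta| \ge \sqrt{\tfrac{2\|f\|_\infty y}{n}} + \tfrac{2 D_1(\mathbf M)\, 2^{jd/2}\, y}{3n}\Bigr) \le 2 e^{-y}.$$

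Third, a union bound over $\eta \in \mathcal Z_j$ with $y = \log(2|\mathcal Z_j|) + x$ controls $\max_\eta |Z_\eta|$ with total probability at most $e^{-x}$, and multiplying the resulting bound by $c_0(\mathbf M,j)\,2^{jd/2}$ produces exactly
$$c_0(\mathbf M,j)\sqrt{2(\log(2|\mathcal Z_j|)+x)\|f\|_\infty}\sqrt{\tfrac{2^{jd}}{n}} + c_0(\mathbf M,j)\tfrac{2}{3}D_1(\mathbf M)(\log(2|\mathcal Z_j|)+x)\tfrac{2^{jd}}{n} = \bar\sigma(n,j,x),$$
which is the asserted deviation. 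No step is genuinely delicate: the only thing to watch is bookkeeping of the Bernstein constants so that the two summands of $\bar\sigma$ come out with the stated factors $\bar\alpha$ and $\bar\alpha'$, and making sure that the variance is controlled by $\|f\|_\infty$ (rather than by a centered second moment that would produce a cleaner but non-intrinsic constant). The substantive input that makes this work so cleanly is the discrete representation $A_j(x,y) = \sum_\eta \phi_{j\eta}(x)\phi_{j\eta}(y)$ together with the localisation bound giving $c_0(\mathbf M,j) = O(1)$; without these, the $\sup_y$ would have to be handled by a chaining or metric-entropy argument.
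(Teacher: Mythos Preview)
Your proof is correct and follows essentially the same route as the paper: the identity $A_j(x,y)=\sum_{\eta\in\mathcal Z_j}\phi_{j\eta}(x)\phi_{j\eta}(y)$ reduces the supremum over $\mathbf M$ to a finite maximum of needlet coefficients (which is exactly the content of the paper's inequality (\ref{supimbed})), then Bernstein's inequality with the bounds $\|\phi_{j\eta}\|_\infty\le D_1(\mathbf M)2^{jd/2}$ and $\|\phi_{j\eta}\|_2\le 1$ from (\ref{subonb}), together with a union bound at level $y=\log(2|\mathcal Z_j|)+x$, yields $\bar\sigma(n,j,x)$. The only cosmetic difference is that the paper first rescales the $\phi_{j\eta}$ by $2U=2D_1(\mathbf M)2^{jd/2}$ so as to apply Bernstein in the ``bounded by one'' form (\ref{bernstein}), whereas you apply the general form directly; the constants match either way.
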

\begin{proof}
The explicit cubature formula for eigenfunctions of $\mathcal L$ allows to reduce the infinite supremum $\sup_{y \in \mathbf M} |f_n(j,y)-Ef_n(j,y)|$ to one over a finite set, so that finite-dimensional probabilistic methods can be applied. Indeed, the estimate (\ref{cnull}) implies that the supremum of any $h \in E_{2^{2j-1}}(\mathbf M)$ over $\bf M$ can be bounded by the (finite) maximum of the needlet coefficients of $h$: Clearly from (\ref{polyid}) $$\forall h \in E_{2^{2j-1}}(\mathbf M), \quad h(x) = A_jh(x)=\sum _{\eta \in\mathcal {Z}_j } \langle \phi_{j\eta}, h \rangle \phi_{j\eta}(x)$$
so that for $\mathcal Z_j$ a cubature set of $E_{\tau 2^{2j+2}}(\bf M)$ one has
\begin{equation} \label{supimbed}
\sup_{x \in \mathbf M} \left| h(x) \right| \le  \max_{\eta \in\mathcal Z_j} \left|\langle \phi_{j\eta}, h \rangle \right|  \sup_{x \in \mathbf M} \sum _{\eta \in\mathcal {Z}_j }| \phi_{j\eta}(x)| = 2^{jd/2}c_0(\mathbf M,j)  \max_{\eta \in\mathcal Z_j} \left|\langle \phi_{j\eta}, h \rangle \right|.
\end{equation}
Now using $\langle \cdot, \cdot \rangle$ notation also acting on finite signed measures, $$\|f_n(j)-Ef_n(j)\|_\infty = \sup_{y \in \mathbf M}\left|\sum_{\eta \in \mathcal Z_j} \phi_{j\eta}(y) \langle \phi_{j \eta}, P_n-P \rangle \right| \le 2^{jd/2} c_0(\mathbf M,j)  \max_{\eta \in \mathcal Z_j} \left|\langle \phi_{j\eta}, P_n-P\rangle \right|$$ by (\ref{cnull}) above. Consider the finite empirical process indexed by the class of functions $\{\phi_{j\eta_k}\}_{k=1}^{|\mathcal Z_j|}$ which has envelope $U= 2^{jd/2}D_1(\mathbf M)$ in view of (\ref{subonb}). The class of functions $$\mathcal G := \left\{\phi_{j \eta_1}/2U,...,\phi_{j \eta_{|\mathcal Z_j|}}/2U \right\},$$ is thus uniformly bounded by $1/2$ and its weak variances $\sigma^2$ satisfy $$\sup_{g \in \mathcal G}Eg^2(X) \le \sigma^2=\frac{\|f\|_\infty}{2^{jd+2}D^2_1(\mathbf M)}$$ since $\|\phi_{j \eta}\|_2\le 1$ (again (\ref{subonb})). Recall Bernstein's inequality (e.g., p.26 in \cite{Massart}): If $Z_1,...,Z_n$ are i.i.d.~centered random variables bounded in absolute value by $1$ then
\begin{equation} \label{bernstein}
\Pr \left\{\left|\frac{1}{n}\sum_{i=1}^n Z_i\right| \ge \sqrt{\frac{2tv}{n}} + \frac{t}{3n} \right\} \le 2e^{-t}
\end{equation}
where $v \ge EZ_i^2$. Therefore, using the notation $\|\mu\|_\mathcal G \equiv \sup_{g \in \mathcal G}|\int gd\mu|$ for signed measures $\mu$,
\begin{eqnarray*}
&& \Pr \Bigg\{\|f_n(j)-Ef_n(j)\|_\infty \ge c_0(\mathbf M,j)\bigg( \sqrt{\frac{2(\log (2|\mathcal Z_j|)+x) 2^{jd} \|f\|_\infty}{n}} \\
&&~~~~ \quad ~~ \quad ~~~~~~~~~~~~~~~~~~~ \quad+ \frac{2U2^{jd/2}(\log (2|\mathcal Z_j|)+x)}{3n}\bigg) \Bigg\} \\
&&\le \Pr \left \{\max_{\eta \in \mathcal Z_j} \left|\langle \phi_{j,\eta}, P_n-P\rangle \right| \ge \sqrt{\frac{2(\log (2|\mathcal Z_j|)+x) \|f\|_\infty}{n}} + \frac{2U(\log (2|\mathcal Z_j|)+x)}{3n}  \right\} \\
&& \le \Pr\left\{\|P_n-P\|_\mathcal G \ge \sqrt{\frac{2(\log (2|\mathcal Z_j|)+x) \|f\|_\infty}{D^2_1(\mathbf M)2^{jd+2} n}} + \frac{\log (2|\mathcal Z_j|)+x}{3n} \right\} \\
&& = \Pr \left\{\max_{m=1,...,|\mathcal Z_j|}\left|\frac{1}{n}\sum_{i=1}^n (g_m(X_i)-Eg_m(X))\right|  \ge \sqrt{\frac{2(\log (2|\mathcal Z_j|)+x)\sigma^2}{n}} + \frac{\log (2|\mathcal Z_j|)+x}{3n} \right\} \\
&& \le \sum_{m=1}^{|\mathcal Z_j|} \Pr \left\{\left|\frac{1}{n}\sum_{i=1}^n (g_m(X_i)-Eg_m(X))\right|  \ge \sqrt{\frac{2(\log (2|\mathcal Z_j|)+x)\sigma^2}{n}} + \frac{\log (2|\mathcal Z_j|)+x}{3n} \right\} \\
&& \le 2 |\mathcal Z_j| \exp\left\{-\log (|\mathcal Z_j|)-\log 2 -x)\right\} =e^{-x},
\end{eqnarray*}
which completes the proof of Proposition \ref{pisneed}.
\end{proof}

\medskip

We should mention that a minor modification of the proof of Proposition \ref{pisneed} combined with the usual blocking arguments (as, e.g., in Theorem 1 in \cite{GineNickl9a}) implies under standard conditions on $j_n$ (including $2^{j_n} \approx n^\eta$ for some $0<\eta<1$) that 
\begin{equation}
\limsup_n \sqrt{\frac{2^{j_nd}j_n}{n}}\sup_{y \in \mathbf M}|f_n(j,y)-Ef_n(y,d)| \le D \quad almost~surely
\end{equation}
where the constant $D$ depends only on $\mathbf M, k_2$ and $\|f\|_{\infty}$. 

In some proofs below we shall need that $\bar \sigma(n,l,x)$ is monotone increasing in $l \in \mathbb N$. In general whether this holds true or not depends on the cubature $\mathcal Z_l$ as well as on the function $a$. Monotonicity of $\bar \sigma(n,l,x)$ can be easily ensured if we replace $\bar \alpha(x,l)$ and $\bar \alpha'(x,l)$ by their upper bounds $\alpha(x,l), \alpha'(x,l)$ obtained from the inequalities $|\mathcal Z_l| \le k_2 2^{ld}, c_0(\mathbf M,l) \le C(\mathbf M)$. While we do not advocate this in practice, for the theoretical development we define 
\begin{equation} \label{AAA}
\sigma(n,l,x) = \alpha(x,l)\sqrt{\frac{2^{ld}}{n}} +  \alpha'(x,l)\frac{2^{ld}}{n}, ~~~ A(n,l,x):=\left[\alpha(x,l) + \alpha'(x,l)\sqrt{2^{ld}/n} \right].
\end{equation}
The constant $A(n,l,x)$ allows for $\sigma(n,l,x)$ to be written as a constant multiple of the 'Gaussian component' $\sqrt{2^{ld}/n}$, that is, $\sigma(n,l,x) = A(n,l,x) \sqrt{2^{ld}/n}.$

\subsection{Concentration Inequalities via Rademacher Processes on Manifolds}

Despite its conceptual simplicity the approach from the previous section has one drawback: the uniform deviations of $f_n-Ef_n$ are controlled globally on $\mathbf M$ by the function $\sigma(n,l,x)$ -- constant on $\mathbf M$. For functions $f$ that exhibit spatially inhomogeneous regularity properties it is of interest to have a 'localised' version of $\sigma(n,l,x)$. This could be achieved in Proposition \ref{pisneed} by means of proving a 'local' analogue of (\ref{supimbed}), which, however, is a rather intricate matter that we do not pursue here. Instead we show how a simple symmetrization technique can be used to deal with this problem. This is inspired by \cite{KolaAOS} and also \cite{GineNickl10b}. For $\Omega$ any subset of $\mathbf M$, define a Rademacher process $\{(1/n)\sum_i \varepsilon_i A_j(X_i,y)\}_{y \in \Omega}$ and set 
$$R_n(\Omega,j)=\sup_{y \in \Omega} \left |\frac{1}{n} \sum_{i=1}^{n} \varepsilon_i A_j(X_i,y) \right|$$ 
with $(\varepsilon_i)_{i=1}^n$ an i.i.d.~Rademacher sequence, independent of the $X_i$'s (and defined on a large product probability space). $R_n(\Omega, j)$ can be computed in practice by first simulating $n$ i.i.d.~random signs, applying these signs to the summands $A_j(X_i)$ of the needlet density estimator, and maximizing the resulting function. The idea is that the supremum $R_n(\Omega, j)$ of the symmetrized process serves as a random surrogate for the unknown supremum $\sup_{y \in \Omega}|f_n(j,y)-Ef_n(j,y)|$ of the centered process. Indeed Proposition \ref{radneed} shows that $\sup_{y \in \Omega}|f_n(y)-Ef_n(y)|$ concentrates around (a constant multiple of) $R_n(\Omega,j)$. Define the deviation term
 $$\sigma^{R}(\Omega, n,j,x)=6R_n(\Omega, j)+10\sqrt{\frac{2^{jd}D_2(\mathbf M)\|f\|_\infty (x+ \log 2)}{n}}+22 \frac{2^{jd} D_2(\mathbf M)(2x+2\log2)}{n}.$$
 
\begin{prop} \label{radneed}
Let $\mathbf M$ be a compact homogeneous manifold and suppose $f:\mathbf M \to [0,\infty)$ is bounded. We have for every $n \in \mathbb N$, every $j \in \mathbb N$, every $\Omega \subseteq \mathbf M$ and every $x>0$ that 
\begin{equation*}
\Pr \left \{\sup_{y \in \Omega} \left|f_n(y,j)-Ef_n(y,j)\right| \ge \sigma^R(\Omega, n,j,x) \right\} \le e^{-x}.
\end{equation*}
\end{prop}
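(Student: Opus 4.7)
Write $S := \sup_{y \in \Omega}|f_n(y,j) - Ef_n(y,j)|$. This is the supremum of the empirical process $(P_n - P)f$ indexed by the function class $\mathcal{F} := \{A_j(\cdot, y) : y \in \Omega\}$. Proposition \ref{weyl} yields both an envelope $U := D_2(\mathbf{M}) 2^{jd}$ and, using the symmetry of the needlet projection kernel in its two arguments, a weak-variance bound
$$\sup_{f \in \mathcal{F}} E f(X)^2 \;\le\; \|f\|_\infty \sup_{y} \int_{\mathbf M} A_j(x,y)^2 \,dx \;\le\; \|f\|_\infty D_2(\mathbf M)\, 2^{jd} \;=:\; v.$$
These two quantities are exactly what one needs to apply Talagrand-type concentration machinery, which is the basic tool to be used (cf.\ \cite{KolaAOS, GineNickl10b}).

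The first step is to apply Bousquet's sharp version of Talagrand's inequality to $S$ with parameter $x + \log 2$: with probability at least $1 - \tfrac12 e^{-x}$,
$$S \;\le\; ES + \sqrt{\frac{2(x+\log 2)(v + 2U\, ES)}{n}} + \frac{U(x+\log 2)}{3n}.$$
Split the cross term via $\sqrt{2a(b+c)} \le \sqrt{2ab}+\sqrt{2ac}$ together with $2\sqrt{ab} \le \epsilon^{-1}a + \epsilon b$ (for a judicious $\epsilon$) to separate the Gaussian piece $\sqrt{2(x+\log 2)v/n}$ from the Poissonian piece $U(x+\log 2)/n$, at the cost of absorbing part of $ES$ into a multiplicative coefficient strictly larger than $1$.

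The second step is the classical symmetrization inequality $ES \le 2\, ER_n(\Omega,j)$. The third step converts $ER_n(\Omega,j)$ into the data-dependent $R_n(\Omega,j)$. Here I would apply either a Klein--Rio lower-tail Talagrand inequality to the symmetrized process (which has the same envelope $U$ and the same weak variance $v$, since $|\varepsilon_i A_j(X_i,y)| = |A_j(X_i,y)|$ and $E[\varepsilon_i^2 A_j(X_i,y)^2]=EA_j(X,y)^2$), or more simply McDiarmid's bounded-differences inequality to $(X_i,\varepsilon_i)_{i=1}^n \mapsto R_n(\Omega,j)$ whose coordinatewise oscillation is at most $2U/n$. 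In either case one obtains, with probability at least $1 - \tfrac12 e^{-x}$,
$$E R_n(\Omega,j) \;\le\; R_n(\Omega,j) + \sqrt{\frac{2(x+\log 2)\,v}{n}} + c\,\frac{U(x+\log 2)}{n}.$$

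Chaining the three estimates via the union bound and unpacking $v=\|f\|_\infty D_2(\mathbf M)2^{jd}$ and $U=D_2(\mathbf M)2^{jd}$ produces the claimed bound of the form $6 R_n + O(1)\sqrt{2^{jd}D_2(\mathbf M)\|f\|_\infty (x+\log 2)/n} + O(1) 2^{jd}D_2(\mathbf M)(x+\log 2)/n$, with the numerical constants $6$, $10$, $22$ arising from tracking the multiplicative factors through the two Bousquet/McDiarmid applications and the symmetrization. The main obstacle is exactly this constant bookkeeping: the choice of $\epsilon$ in the AM--GM split of Bousquet's cross term, and the allocation of $\log 2$ shifts across the two high-probability events, both feed directly into the final coefficients, so the argument is conceptually straightforward but must be executed with some care.
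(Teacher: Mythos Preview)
Your approach is correct and essentially identical to the paper's. The paper packages your three steps (Talagrand/Bousquet upper tail, symmetrization $ES\le 2ER_n$, lower-tail replacement of $ER_n$ by $R_n$) into a single black-box empirical-process inequality (Proposition~\ref{rad}, quoted from \cite{ln10}), then applies it to the normalized class $\mathcal G_j=\{A_j(\cdot,y)/(2^{jd+1}D_2(\mathbf M)):y\in\Omega\}$ with exactly the envelope and weak-variance bounds you identified from Proposition~\ref{weyl}; the factor $2$ in $22\cdot 2^{jd}D_2(\mathbf M)(2x+2\log 2)/n$ comes from undoing the normalization by $2U$. One minor technicality you omit: to legitimately take the supremum inside the probability (and to apply Talagrand's inequality), the paper first reduces $\Omega$ to a countable dense subset using continuity of $y\mapsto A_j(x,y)$.
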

\begin{proof}
We use the following general result for empirical processes. 
\begin{prop} \label{rad} Let $\mathcal F$ be a countable class of real-valued measurable functions defined on $\mathbf M$, uniformly bounded by $1/2$. We have for every $n \in \mathbb N$ and $x>0$
\begin{equation*}
\Pr \left \{\left\|\frac{1}{n}\sum_{i=1}^n (f(X_i)-Pf)\right\|_\mathcal F \ge 6\left\|\frac{1}{n}\sum_{i=1}^n \varepsilon_i f(X_i)\right\|_\mathcal F + 10\sqrt{\frac{(x+\log 2)\sigma^2}{n}} + 22 \frac{x+\log 2}{n} \right\} \le e^{-x}
\end{equation*}
\end{prop}
The proof, which is based on \cite{tala}'s inequality with constants (e.g., \cite{Massart}), is inspired by ideas in \cite{KolaAOS}, \cite{GineNickl10b}, and can be found in Proposition 5 in \cite{ln10}. Now to prove Proposition \ref{radneed} note that
\begin{equation*}
\|f_n(j)-Ef_n(j)\|_\Omega = \sup_{y \in \Omega} \left|\frac{1}{n} \sum_{i=1}^n \left(A_j(X_i,y)-EA_j(X,y)\right)\right| 
\end{equation*}
for $\Omega \subseteq \mathbf M$. This amounts to studying the empirical process indexed by the class of functions $\left\{A_j(\cdot,y): y \in \Omega \right\}$ for $\Omega \subseteq \mathbf M$. 
This class has envelope $2^{jd}D_2(\mathbf M)$ in view of Proposition \ref{weyl}. Define thus
\begin{equation} \label{class}
\mathcal G := \mathcal G_j = \left \{A_j(\cdot,y)/(2^{jd+1}D_2(\mathbf M)): y \in \Omega \right\}
\end{equation}
which is uniformly bounded by $1/2$. [In fact, by continuity of the mapping $y \mapsto A_j(x,y)$ for every $x \in \mathbf M$ we can restrict ourselves to a countable subset of $\Omega$, which we still denote by $\Omega$.] Furthermore the upper bound for the weak variances can be taken to be 
\begin{equation} \label{weak0}
\sup_{g \in \mathcal G}Eg^2(X)  \le \frac{\|f\|_\infty }{D_2^2(\mathbf M)2^{2jd+2}} \sup_{y \in \mathbf M}\int_{\mathbf M} A_j^2(x,y)dx \le \frac{\|f\|_\infty}{D_2(\mathbf M)2^{jd+2}} =:\sigma^2
\end{equation}
in view of Proposition \ref{weyl}. Then, recalling the notation $\|\cdot\|_\mathcal G$ from the proof of Proposition \ref{pisneed}
$$\Pr \Bigg\{\left\|f_n(j,\cdot)-Ef_n(j,\cdot)\right\|_\Omega \ge 6R_n(\Omega, j) + 10\sqrt{\frac{2^{jd}D_2(\mathbf M)\|f\|_\infty(x+\log 2)}{n}} $$ $$~~~~~~~~~+ 22 \frac{2^{jd}D_2(\mathbf M)(2x+2\log 2)}{n} \Bigg\} $$
$$= \Pr \left \{\left\|\frac{1}{n}\sum_{i=1}^n (g(X_i)-Pg)\right\|_\mathcal G \ge \frac{6R_n(\Omega, j)}{2^{jd+1}D_2(\mathbf M)} + 10\sqrt{\frac{\|f\|_\infty(x+\log 2)}{D_2(\mathbf M)2^{jd+2}n}} + 22 \frac{x+\log 2}{n} \right\} $$
$$= \Pr \left \{\left\|\frac{1}{n}\sum_{i=1}^n (g(X_i)-Pg)\right\|_\mathcal G \ge 6\left\|\frac{1}{n}\sum_{i=1}^n \varepsilon_i g(X_i)\right\|_\mathcal G  + 10\sqrt{\frac{(x+\log 2)\sigma^2}{n}} + 22 \frac{x+\log 2}{n} \right\}$$
and the last expression is less than or equal to $e^{-x}$ using Proposition \ref{rad} with $\mathcal G$ as in (\ref{class}) and $\sigma$ specified by (\ref{weak0}).
\end{proof}

\medskip

It is interesting to compare $\sigma^R$ to $\sigma$ from Proposition \ref{pisneed}. On the one hand the second and third terms defining $\sigma^R(\Omega, n,j,x)$ are of a smaller asymptotic order than $\sigma(n,j,x)$ for $j \to \infty$ due to the absence of $|\mathcal Z_j|$ in $\sigma^R$. On the other hand the term $R_n(\Omega, j)$ is random, and one is led to ask whether in average $\sigma^R$ will be larger or smaller than $\sigma$. Our proofs imply, for some constant $C$ independent of $j,n$, that $$ER_n(\Omega, j) \le C \left( \sqrt{\frac{2^{jd}j}{n}} + \frac{2^{jd}j}{n}\right)$$ so that $\sigma^R$ has the same size as $\sigma$ as a function of $j,n$, up to constants. 

Inspection of the proofs and arguments similar to those in the proof of Proposition 2 in \cite{GineNickl10b} show that $R_n(\Omega,j)$ in Proposition \ref{radneed} can be replaced by its (conditional) expectation $E_\varepsilon R_n(\Omega, j)$ -- a quantity that may be more stable in applications. Moreover, the constants appearing in the definition of $\sigma^R$ may still be fairly conservative: the proof is based on an application of \cite{tala}'s inequality with explicit constants (see \cite{Massart}), and in the lower deviation version thereof the optimal constants are not known yet. 

\section{Confidence Bands}

If the size of the bias $\|Ef_n(j)-f\|_\infty$ were known, one could directly use Propositions \ref{pisneed} or \ref{radneed} and a suitable choice of $j$ to obtain confidence bands with prescribed finite sample coverage. For instance, if $f$ is the uniform distribution (volume element) on $\mathbf M$, the bias $A_0(f)-f$ of the estimate $f_n(0)$ is exactly zero. In analogy, if $f \in E_n(\mathbf M)$ is a finite linear combination of eigenfunctions of $\mathcal L$ (so in the spherical case a polynomial) then the estimator $f_n(J)$ for sufficiently large but finite $J$ also has bias zero (cf.~(\ref{polyid})). As usual, going beyond finite-dimensional smoothness classes is possible by considering spaces of differentiable functions on $\mathbf M$. For instance one defines
$C^k( \mathbf M)$ as the set of continuous functions  $f \in C(\mathbf M)$ such that for all $X_1, X_2,\ldots, X_k$ in  $Lie(G),\; D_{X_1} D_{X_2} \ldots D_{X_k} f \in C( \mathbf M).$ It is a Banach space when equipped with the following norm:
$$ \|f \|_{C^k} = \sup_{| X_1| \leq 1,\ldots,| X_k| \leq 1} \|D_{X_1} D_{X_2} \ldots D_{X_k} f \|_\infty + \|f \|_\infty,$$ and $C^\infty (\mathbf M)$ is the intersection of all the spaces $C^k(\mathbf M), k \in \bN.$ One can define such spaces also for noninteger $k$ by introducing a modulus of continuity along vectorial directions $X$, and the resulting scale of H\"{o}lder-Zygmund function spaces $\mathcal C^k(\mathbf M)$ can be characterized by the decay of their needlet coefficients in very much the same way as in the case of H\"{o}lder-Zygmund spaces on Euclidean spaces: A $k$-regular function in $\mathcal C^k(\mathbf M), k>0$ then satisfies the estimate 
\begin{equation} \label{44}
\|A_j(f) - f\|_\infty \le C 2^{-jk}.
\end{equation}
See \cite{GP10} for these results. If the smoothness degree $t$ of $f$ is known such bounds can be used, together with Propositions \ref{pisneed}, \ref{radneed}, in the construction of asymptotic confidence sets, proceeding in the same way as in the classical paper \cite{bros} via choosing a resolution level $j_n$ that leads to 'undersmoothing', i.e., a bias of smaller order as a function of $n$ than the random fluctuations of the centered estimators.

However, in the typical nonparametric function estimation problem the size of the bias is not known, and the above assumptions are far from realistic. So we have the more ambitious goal to obtain confidence sets for the needlet estimator with an automatic choice of the resolution level $j$.

\subsection{Estimate of the Resolution Level}

Split the sample into two parts $\cS_1$ and $\cS_2$, each of (integer) size $n_1>0$ and $n_2>0$ respectively. For asymptotic considerations we shall require that $n_1/n_2$ is bounded away from zero and infinity as $n \to \infty$. Denote by $$P_{n_1} = \frac{1}{n_1}\sum_{i=1}^{n_1} \delta_{X_i}, ~~and~~ P_{n_2} = \frac{1}{n_2} \sum_{i=1}^{n_2} \delta_{X_{n_1+i}}$$ the empirical measures associated with the first and the second subsample, respectively, and define the associated needlet density estimators $$f_{n_v}(j,y)=\int_{\mathbf M} A_j(x,y)dP_{n_v}(x), ~~~ y \in \mathbf M,~~~v=1,2.$$ 

We use the sample $\cS_2$ to choose the resolution level $j$. For $n_2>1$, choose an integer  $j_{\max}:= j_{\max, n}$ and define the grid of candidate bandwidths as $$\mathcal J := \mathcal J_n = \left\{[0, j_{\max}] \cap \mathbb N \right\}.$$ For asymptotic considerations we shall only require 
\begin{equation} \label{jmax}
2^{j_{\max}} \simeq \left(\frac{n_2}{(\log n_2)^2} \right)^{1/d},
\end{equation}
 but a practical choice is to first choose $l^*$ such that $\alpha(x,l^*)\sqrt{2^{l^*}/n_2}=\alpha'(x,l^*)(2^{l^*}/n_2)$ and to define $j_{\max}$ such that $2^{j_{\max}}=2^{l^*}/(\log n_2)^{1/d}$. Such a choice of $j_{\max}$ is just slightly below the boundary where the Poissonian term starts to dominate the Gaussian term in $\sigma(n_2,l,x)$ in Proposition \ref{pisneed}, and choosing $j>j_{\max}$ would then result in inconsistent estimators, so that $j_{\max}$ is a natural upper bound for $\mathcal J$.

The goal is to select a data-driven bandwidth $\hat j_n$ from $\mathcal J_n$. Heuristically, for $l>j$, $$f_{n_2}(j)-f_{n_2}(l) = [f_{n_2}(j)-Ef_{n_2}(j)] -[f_{n_2}(l)-Ef_{n_2}(l)] + [A_j(f)-f] -[A_l(f)-f]$$ and with large probability the first two terms should not exceed $2\sigma(n_2,l,x)$, a quantity that increases in $l$, and we would like to choose $\hat j_n$ to be the smallest $j$ such that the approximation error $2(A_j(f)-f)$ (which decreases in $j$) does not exceed the size $2\sigma(n_2,l,x)$ of the random fluctuations. 

We shall use the subsample $\cS_2$ to select $\hat j_n$ following this idea, which is due to \cite{Lepski}, formalised as follows:
\begin{equation}
\hat j_n = \min \bigg \{ j \in \mathcal J: \|f_{n_2}(j) - f_{n_2}(l) \|_\Omega \le 4\sigma(n,l) ~~ \forall l>j, l\in {\mathcal J} \bigg \}.
\label{lepski}
\end{equation}where $\sigma(n,l)=\sigma(n_2,l,\kappa{\log n_2})$, cf.~(\ref{AAA}), where $\kappa>0$ is any numerical constant (see Remark \ref{kap} for discussion). By definition $\hat j_n=j_{\max}$ if $\forall j,\; \exists \; l>j,\; l,j \;\in {\mathcal J} $,  $\|f_{n_2}(j) - f_{n_2}(l) \|_\Omega > 4\sigma(n,l)$.

A few remarks about the constants involved in the definition of $\sigma(n,l)$ are in order: All these constants are explicit once the function $a$ and the cubature $\mathcal Z_j$ have been chosen, except for the quantity $\|f\|_\infty$. If no upper bound for $\|f\|_\infty$ is known we advocate that $\|f\|_\infty$ be replaced by $\|f_n(j_{\max})\|_\infty$. Standard arguments imply that this random quantity exponentially concentrates around $\|f\|_\infty$, see for instance \cite{GineNickl10b}. Consequently we neglect the case of $\|f\|_\infty$ unknown in what follows in order to reduce technicalities. Moreover we shall see below how the choice of the numerical constant $\kappa$ influences the finite-sample performance, but our results hold for any choice $\kappa>0$, in particular it does not have to be 'large enough' (as is often assumed in the adaptive estimation literature).

\subsection{Confidence Bands with Random Sizes}

To construct the center of the corridor of the confidence band over $\Omega \subseteq \mathbf M$ we evaluate the linear estimator $f_{n_1}(\cdot,y)$ from (\ref{est}) at the random bandwidth $\hat j_n$. It turns out that some undersmoothing is useful -- in fact crucial -- so let $u_n$ be a sequence of natural numbers and define $$\hat f_n(y) = f_{n_1}(\hat j_n+u_n,y), ~~~~~~~~~~~y \in \Omega.$$ We shall see below how the sequence $u_n$ influences our results but heuristically, and for asymptotic considerations, one may think of $u_n$ of the order $\log \log n$. 

The confidence band we propose is centered at $\hat f_n(y), y \in \Omega,$ and has random size $$s_n(x)=1.01\sigma(n_1,\hat j_n+u_n,x),$$ cf.~(\ref{AAA}), more precisely
\begin{equation} \label{band}
C_n:=C_n(x,y)=\left[\hat f_n (y) - s_n(x), \hat f_n(y) + s_n(x) \right],~~~x>0, ~ y \in \Omega \subseteq \mathbf M.
\end{equation}
Alternatively one can use the band size $s^R_n(\Omega, x) = 1.01 \sigma^R(\Omega, n_1, \hat j_n+u_n, x),$ and all results proved below go through by virtue of Proposition \ref{radneed} and using techniques from Rademacher processes (as in \cite{GineNickl10b}), but we abstain from this to reduce technicalities.

\subsection{Coverage and Adaptation Properties of $C_n$}

\subsubsection{Coverage over Eigenspaces of $\mathcal L$ -- the Finite Dimensional Case}

We first consider here the important case where $f$ is a very smooth function, that is, a fixed linear combination of eigenfunctions of $\mathcal L$, so $f \in E_{2^J-1}(\mathbf M)$ for some fixed $J$. For simplicity of exposition let us consider the case of global confidence bands $\Omega=\mathbf M$ only in this subsection. We start with the case where $f$ equals the volume element of $\mathbf M$.

\begin{theo} \label{uniform} If $f$ is the volume element of $\mathbf M$, $\int_\mathbf M f(x)dx=1$, then we have, for every $n \in \mathbb N$ $$\Pr(\hat j_n = 0) \ge 1- 2j_{\max} n_2^{-2\kappa}.$$ Furthermore, for every $n \in \mathbb N$ and every $x>0$ we have
\begin{equation}
\Pr \left\{f(y) \in C_n(x,y) ~\textrm{ for every }~y \in \mathbf M \right\} \ge 1-e^{-x}
\end{equation}
and, if $2^{u_nd}/n \to 0$ as $n\to \infty$ then $s_n(x) =O_{\Pr} \left(2^{u_nd/2}/\sqrt n \right).$
\end{theo}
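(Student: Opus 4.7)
The proof hinges on the observation that when $f$ is the volume element, it is a constant function, hence lies in the zero-eigenspace of $\mathcal L$; thus $f \in E_n(\mathbf M)$ for every $n \ge 0$, and \eref{polyid} together with the definition of $A_0$ give $A_j f = f$ identically for every $j \ge 0$. In particular the bias $Ef_{n_v}(j) - f = A_j f - f$ vanishes at every resolution level, and this drives all three assertions.

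For the first assertion, with the bias gone one has $f_{n_2}(j) - f_{n_2}(l) = (f_{n_2}(j) - Ef_{n_2}(j)) - (f_{n_2}(l) - Ef_{n_2}(l))$, so by the triangle inequality and the monotonicity of $\sigma(n_2, \cdot, x)$ in its second argument it suffices to verify that $\|f_{n_2}(l) - Ef_{n_2}(l)\|_\Omega \le 2\sigma(n, l)$ simultaneously for every $l \in \mathcal J$. I would apply Proposition~\ref{pisneed} to each fixed $l$ with deviation parameter $x = 2\kappa \log n_2$; inspection of the explicit formulas for $\alpha, \alpha'$ in \eref{AAA} shows $\sigma(n_2, l, 2\kappa \log n_2) \le 2\sigma(n, l)$, so a union bound over $|\mathcal J| \le j_{\max} + 1$ candidate levels delivers the advertised probability $2 j_{\max} n_2^{-2\kappa}$, and on the good event the definition \eref{lepski} forces $\hat j_n = 0$.

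The coverage claim exploits the sample splitting: $\hat j_n$ is $\cS_2$-measurable while $f_{n_1}$ depends only on the independent sample $\cS_1$. Conditioning on $\cS_2$, the level $\hat j_n + u_n$ is deterministic, so Proposition~\ref{pisneed} applies directly to $f_{n_1}$ and gives, using vanishing of the bias, $\|\hat f_n - f\|_\infty \le \bar \sigma(n_1, \hat j_n + u_n, x) \le \sigma(n_1, \hat j_n + u_n, x) < s_n(x)$ with conditional probability $\ge 1 - e^{-x}$; integrating over $\cS_2$ yields the unconditional bound. For the rate, intersect with the event $\{\hat j_n = 0\}$ from the first part: there $s_n(x) = 1.01\, A(n_1, u_n, x)\sqrt{2^{u_n d}/n_1}$ by \eref{AAA}, and under $2^{u_n d}/n \to 0$ the prefactor $A(n_1, u_n, x)$ stays bounded (in $n$, for fixed $x$), giving $s_n(x) = O_\Pr(2^{u_n d/2}/\sqrt n)$.

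The one point requiring care is the constant bookkeeping in the first step: one must verify that the factor $4$ in the Lepski rule simultaneously absorbs the factor $2$ lost when moving from $\sigma(n, l, \kappa \log n_2)$ to $\sigma(n, l, 2\kappa \log n_2)$ (needed to hit the advertised probability $n_2^{-2\kappa}$) and the triangle-inequality splitting of $f_{n_2}(0) - f_{n_2}(l)$ into two centred contributions, each of which contributes up to $2\sigma(n, l)$ via monotonicity of $\sigma(n, \cdot, x)$. Everything else reduces to routine application of Proposition~\ref{pisneed} combined with the vanishing bias.
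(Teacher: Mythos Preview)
Your argument is correct and follows essentially the same route as the paper's proof: vanishing bias from $A_jf=f$, triangle inequality plus Proposition~\ref{pisneed} with $x=2\kappa\log n_2$ and a union bound for the first claim, and conditioning on the second sample (equivalently, summing over the values of $\hat j_n$) to invoke Proposition~\ref{pisneed} for coverage. The only cosmetic difference is that you organise the first step as a single good event $\bigcap_{l\in\mathcal J}\{\|f_{n_2}(l)-Ef_{n_2}(l)\|_\Omega\le 2\sigma(n,l)\}$ and use monotonicity of $\sigma(n,\cdot,x)$ to handle the $l=0$ piece, whereas the paper bounds each summand $\Pr\{\|f_{n_2}(0)-f_{n_2}(l)\|_\infty>4\sigma(n,l)\}$ separately by $2n_2^{-2\kappa}$; both lead to the same $2j_{\max}n_2^{-2\kappa}$ bound.
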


In other words our automatic band $C_n$ attains \textit{exact} finite sample coverage if $f$ is uniformly distributed, and in the usual situation where $u_n = \log \log n$ the size of the band shrinks almost at the parametric rate $1/\sqrt n$.

It is instructive to consider next the case where $f \in E_{2^{2J-1}}(\mathbf M) \setminus E_{2^{2J-2}}(\mathbf M)$ for some fixed $J \in \mathbb N$. We would then hope that $\hat j_n =J$ with large probability, as then $A_J(f)-f=0$ (see (\ref{polyid}) above). In the following theorem we restrict ourselves to asymptotic considerations to highlight the main ideas.

\begin{theo} \label{poly}
Suppose $f \in E_{2^{2J-1}}(\mathbf M) \setminus E_{2^{2J-2}}(\mathbf M)$ for some fixed $J \in \mathbb N$. We then have that $$\Pr(\hat j_n \notin [J-1,J]) = O(n^{-2\kappa}+e^{-cn})$$ as $n \to \infty$ for some constant $c$ that depends on $f$ only through $\|f\|_\infty$ and through $$b_1(f)\equiv \inf_{p \in E_{2^{2J-2}}(\mathbf M)}\|p-f\|_\infty>0.$$ Moreover if $u_n>1~ \forall n \in \bN$ then 
\begin{equation} \label{ptc}
\Pr \left\{f(y) \in C_n(x,y) ~\textrm{ for every }~y \in \mathbf M \right\} = 1-e^{-x} - O(e^{-cn})
\end{equation}
 and if $2^{u_nd}/n \to 0$ as $n\to \infty$ then $s_n(x) =O_{\Pr} \left(2^{u_nd/2}/\sqrt n \right).$
\end{theo}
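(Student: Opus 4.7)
The proof hinges on two algebraic observations. For every $l\ge J$, the hypothesis $f\in E_{2^{2J-1}}$ combined with (\ref{polyid}) gives $A_l(f)=f$, so the bias vanishes from level $J$ upwards. For every $j\le J-2$, on the other hand, $A_j(f)\in E_{2^{2j}}\subseteq E_{2^{2J-2}}$, so $\|A_j(f)-f\|_\infty\ge b_1(f)$; positivity of $b_1(f)$ follows because the finite-dimensional subspace $E_{2^{2J-2}}$ is closed in $C(\mathbf M)$ and $f$ lies outside it. To bound $\Pr(\hat j_n>J)$ I would verify the Lepski criterion at $j=J$: for each $l>J$ in $\mathcal J$ the biases cancel in $f_{n_2}(J)-f_{n_2}(l)$, so Proposition \ref{pisneed} applied at levels $J$ and $l$ (with $x$ of the order of a suitable multiple of $\log n_2$, using $\bar\sigma\le\sigma$) yields $\|f_{n_2}(J)-f_{n_2}(l)\|_\infty\le 2\sigma(n,l)\le 4\sigma(n,l)$ off an event of probability $O(n_2^{-2\kappa})$, and a union bound over $\mathcal J$ produces the $O(n^{-2\kappa})$ term.

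\textbf{Ruling out $\hat j_n\le J-2$.} If $\hat j_n\le J-2$, taking $l=J$ in the Lepski criterion forces
\[
b_1(f)\le\|A_{J-2}(f)-f\|_\infty\le 4\sigma(n,J)+\|f_{n_2}(J-2)-A_{J-2}(f)\|_\infty+\|f_{n_2}(J)-f\|_\infty.
\]
Since $\sigma(n,J)=O((\log n/n)^{1/2})$ for the fixed level $J$, for $n$ large one has $4\sigma(n,J)\le b_1(f)/2$, and therefore at least one random term on the right must exceed $b_1(f)/4$. This is a \emph{constant-scale} deviation, controlled by Proposition \ref{pisneed} by inverting $\bar\sigma(n_2,j,x)=b_1(f)/4$, which forces $x$ to be of order $n_2\,b_1(f)^2/\|f\|_\infty$ and produces the bound $O(e^{-cn})$ with $c=c(b_1(f),\|f\|_\infty)$, after a trivial union bound over $j\in\{0,\dots,J-2\}$. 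This constant-scale Bernstein step is the main obstacle of the proof, since it is what upgrades a polynomial $n^{-\kappa}$ rate to the genuinely exponential $e^{-cn}$ rate and since one must verify that $c$ depends on $f$ only through the two structural quantities $\|f\|_\infty$ and $b_1(f)$.

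\textbf{Coverage.} On the event $\{\hat j_n\ge J-1\}$, which has probability $1-O(e^{-cn})$ by the previous step, the hypothesis $u_n>1$ gives $\hat j_n+u_n\ge J$, and so $A_{\hat j_n+u_n}(f)=f$ identically. As $\hat j_n$ is $\cS_2$-measurable and hence independent of $\cS_1$, conditioning on $\cS_2$ freezes the resolution level and Proposition \ref{pisneed} applied to $\cS_1$ at the fixed level $\hat j_n+u_n$ yields $\sup_{y\in\mathbf M}|\hat f_n(y)-f(y)|\le\sigma(n_1,\hat j_n+u_n,x)<s_n(x)$ off a set of probability $\le e^{-x}$, after integrating in $\cS_2$. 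Crucially, the event $\{\hat j_n>J\}$ does \emph{not} spoil coverage either, because $A_{\hat j_n+u_n}(f)=f$ still holds; this is precisely why the $O(n^{-2\kappa})$ remainder from the localisation argument is absent from (\ref{ptc}) and only the $O(e^{-cn})$ term survives.

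\textbf{Band size.} On the event $\{\hat j_n\le J\}$ of probability $1-O(n^{-2\kappa})\to 1$,
\[
s_n(x)=1.01\,\sigma(n_1,\hat j_n+u_n,x)\le 1.01\,\sigma(n_1,J+u_n,x)=O\!\bigl(2^{u_nd/2}/\sqrt{n}\bigr),
\]
where $2^{Jd/2}$ is absorbed into the constant and the assumption $2^{u_nd}/n\to 0$ ensures the Gaussian term in (\ref{AAA}) dominates the Poissonian one. This gives the claimed $s_n(x)=O_\Pr(2^{u_nd/2}/\sqrt n)$.
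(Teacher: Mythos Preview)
Your proof is correct and follows essentially the same route as the paper's: Proposition~\ref{pisneed} controls $\Pr(\hat j_n > J)$ by checking the Lepski criterion at $j=J$, the bound on $\Pr(\hat j_n < J-1)$ comes from a constant-scale Bernstein deviation at each fixed level $j\le J-2$, and coverage follows by conditioning on $\cS_2$ and using independence together with the vanishing bias for $\hat j_n + u_n \ge J$. One minor slip worth fixing: in your displayed inequality the index $J-2$ should be the generic $j$ (or $\hat j_n$), since $\hat j_n\le J-2$ only guarantees the Lepski test at the random level $\hat j_n$, not at $J-2$; your subsequent union-bound remark shows you have the correct argument in mind.
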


Thus the confidence band $C_n$ has asymptotic coverage for any fixed spherical polynomial, and the asymptotic size of the band $C_n$ is of order $1/\sqrt n$ up to the undersmoothing factor. 

Clearly we have neglected the question of honesty of $C_n$, that is we have not addressed the question whether (\ref{ptc}) holds uniformly in $f \in \cup_{0 \le j \le J-1} E_{2^j}(\mathbf M)$. Inspection of the proof implies that $C_n$ is honest over linear combinations of eigenfunctions of $\mathcal L$ for which the separation constants $b_1(f)$ are bounded below by a constant multiple of $1/\sqrt n$. That uniformity over all densities between $E_{2^{2J-1}}$ and $E_{2^{2J-2}}$ cannot be attained for our 'adaptive' procedure is related to impossibility results for post-model selection estimators in finite-dimensional models, see \cite{lpo}.

\subsubsection{Asymptotic Coverage over H\"{o}lder Balls}

Theorem \ref{poly} just resembles the finite dimensional situation, and if it were indeed known apriori that $f \in E_{2^{J-1}}(\mathbf M)$ for a fixed $J$ one could simply use $f_n(J)$ as an estimator, circumventing the uniformity problems raised in the previous subsection. However if no finite-dimensional model seems realistic for $f$ we may accept these uniformity problems for which $b_1(f)$ is not well-behaved if in return our procedure performs well in the infinite-dimensional setting. Note that in the usual infinite-dimensional nonparametric models the default estimator $f_n(j_{\max})$ has only a logarithmic rate of convergence to zero in supremum norm risk, and will lead to unneccesarily large confidence bands. In contrast our confidence band $C_n$ adapts over an infinite-dimensional class of H\"{o}lder continuous densities $f$ as we show in this section. 

Our first main result is that the size of the band $C_n$ equals, with large probability, the optimal band size that one would obtain from balancing approximation error $A_j(f)-f$ and random fluctuations $f_n(j)-A_j(f)$. For asymptotic considerations this will imply that our band shrinks at the optimal rate of convergence depending on the regularity of $f$. To formalize this statement we shall impose a regularity condition on the density $f$, namely that its approximation errors $\|A_j(f)-f\|_\Omega$ are bounded by a constant multiple of $2^{-jt}$ for some unknown $t>0$. As mentioned in (\ref{44}) above this is tantamount to assuming a classical $t$-H\"{o}lder condition on $f$. The theoretical bandwidth that balances bias and variance is then, up to additive constants (see (\ref{star}) below for an exact definition) $$j_n^*(t) = \frac{1}{2t+d} (\log_2 n-\log_2 \log n).$$

\begin{theo} \label{adapt} {\bf (Size of the band)} Let $\Omega$ be any subset of $\mathbf M$. Suppose $f:\mathbf M \to [0,\infty)$ is bounded and that $\|A_j(f)-f\|_\Omega \le b_2 2^{-jt}$ for some $b_2>0$ and some $t>0$. 
Let $2s_n(x)$ be the diameter of the band $C_n(x,y)$. Then, for every $n \in \mathbb N, x >0$, $$\Pr \left\{s_n(x) > 1.01 \sigma(n_1,j_n^*(t)+u_n+1,x) \right \}\le 2(j_{\max}-j_n^*(t))n_2^{-\kappa }.$$ 
In particular, if the undersmoothing constants $u_n$ are such that $$r_n(t):=\left(\frac{\log n}{n} \right)^{\frac{t}{2t+d}} 2^{\frac{u_nd}{2}}=o(1)$$ as $n \to \infty$ then $s_n(x) = O_{\Pr}(r_n(t))$.
\end{theo}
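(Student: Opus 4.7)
The plan is to recognize that the statement about $s_n(x)$ is really a statement about $\hat j_n$, and then to show that Lepski's selection criterion is satisfied at level $j_n^*(t)+1$ with the stated probability, so that $\hat j_n \le j_n^*(t)+1$. Since $l \mapsto \sigma(n_1,l,x)$ is monotone in $l$ (this is exactly why the authors replaced $\bar\sigma$ by $\sigma$ in (\ref{AAA})), the event
\begin{equation*}
\{s_n(x) > 1.01\,\sigma(n_1, j_n^*(t)+u_n+1,x)\} = \{1.01\,\sigma(n_1,\hat j_n+u_n,x) > 1.01\,\sigma(n_1,j_n^*(t)+u_n+1,x)\}
\end{equation*}
is contained in the event $\{\hat j_n \ge j_n^*(t)+2\}$. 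So the target probability bound reduces to showing $\Pr(\hat j_n \ge j_n^*(t)+2) \le 2(j_{\max}-j_n^*(t))n_2^{-\kappa}$, i.e., that the definition (\ref{lepski}) accepts $j=j_n^*(t)+1$ on an event of the required probability.

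For a fixed $j=j_n^*(t)+1$, I would split, for every $l > j$ with $l \in \mathcal J$,
\begin{equation*}
\|f_{n_2}(j)-f_{n_2}(l)\|_\Omega \le \|f_{n_2}(j)-A_j(f)\|_\Omega + \|A_j(f)-f\|_\Omega + \|f-A_l(f)\|_\Omega + \|f_{n_2}(l)-A_l(f)\|_\Omega .
\end{equation*}
The two bias terms are bounded, using the hypothesis, by $b_2(2^{-jt}+2^{-lt}) \le 2b_2 2^{-jt}$ since $l>j$ and $t>0$. Defining the good event
\begin{equation*}
G = \bigcap_{l \in \mathcal J,\, l \ge j}\!\left\{\|f_{n_2}(l)-A_l(f)\|_\Omega \le \sigma(n_2,l)\right\},
\end{equation*}
Proposition \ref{pisneed} applied with $x=\kappa\log n_2$, combined with a union bound over the at most $j_{\max}-j_n^*(t)$ relevant indices (and using that $\sigma(n_2,l) \ge \bar\sigma(n_2,l,\kappa\log n_2)$), gives $\Pr(G^c) \le 2(j_{\max}-j_n^*(t))n_2^{-\kappa}$. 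On $G$, the two stochastic terms are bounded by $\sigma(n_2,j) + \sigma(n_2,l) \le 2\sigma(n_2,l)$ by monotonicity.

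The last input is that, by the exact definition of $j_n^*(t)$ in equation (\ref{star}) as the integer where bias meets variance, one has $b_2 2^{-j_n^*(t)\,t} \le \sigma(n_2,j_n^*(t))$, and therefore for $j=j_n^*(t)+1$ and any $l>j$,
\begin{equation*}
2b_2 2^{-jt} \le 2 b_2 2^{-j_n^*(t)\,t} \le 2\sigma(n_2,j_n^*(t)) \le 2\sigma(n_2,l).
\end{equation*}
Summing, on $G$ we get $\|f_{n_2}(j)-f_{n_2}(l)\|_\Omega \le 4\sigma(n_2,l)$ for every $l > j$ in $\mathcal J$, which is exactly the acceptance condition in (\ref{lepski}). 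Hence $\hat j_n \le j = j_n^*(t)+1$ on $G$, proving the first claim.

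For the in-particular statement I would simply substitute: on the event $\{\hat j_n \le j_n^*(t)+1\}$, monotonicity of $\sigma(n_1,\cdot,x)$ yields $s_n(x) \le 1.01\,\sigma(n_1,j_n^*(t)+u_n+1,x)$, and a direct calculation using the form $\sigma(n_1,l,x) \lesssim \sqrt{2^{ld}(\log n)/n} + 2^{ld}(\log n)/n$ together with $2^{j_n^*(t)d} \asymp (n/\log n)^{d/(2t+d)}$ shows that the leading (Gaussian) term is of order $2^{u_n d/2}(\log n/n)^{t/(2t+d)} = r_n(t)$, and that the Poisson term is of smaller order under the assumption $r_n(t)=o(1)$ and $2^{j_{\max}}\asymp (n_2/(\log n_2)^2)^{1/d}$. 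The main delicate point is the calibration of the constant $4$ in the Lepski criterion against the $2\sigma + 2\sigma$ split above; everything else is a union bound plus the definition of $j_n^*(t)$.
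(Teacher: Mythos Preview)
Your approach is essentially the same as the paper's: the paper reduces the claim to $\Pr(\hat j_n > j_n^*(t)+1) \le 2(j_{\max}-j_n^*(t))n_2^{-\kappa}$ (which is Lemma~\ref{bandcons}\,(a), proved using only the upper-bias hypothesis of Theorem~\ref{adapt}), and then invokes monotonicity of $\sigma$ and the explicit form (\ref{linrat}) of $j_n^*(t)$ for the second claim---exactly as you do.

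One correction is needed in your bias step. You write ``by the exact definition of $j_n^*(t)$ \ldots\ one has $b_2 2^{-j_n^*(t)\,t} \le \sigma(n_2,j_n^*(t))$.'' This is the wrong direction: by (\ref{star}), $j_n^*(t)+1$ is the \emph{first} index in $\mathcal J\setminus\{0\}$ at which $B(j,t)\le\sigma(n_2,j)$, so at $j_n^*(t)$ one has $B(j_n^*(t),t)\ge\sigma(n_2,j_n^*(t))$ (the paper even arranges this for $j_n^*(t)=0$). The fix is immediate and in fact removes your detour through $j_n^*(t)$: since $j=j_n^*(t)+1=j_n^{*+}$, the definition gives $b_2 2^{-jt}=B(j_n^{*+},t)\le\sigma(n_2,j_n^{*+})\le\sigma(n_2,l)$ for every $l>j$, so $2b_2 2^{-jt}\le 2\sigma(n_2,l)$ directly. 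With this correction your chain $\|f_{n_2}(j)-f_{n_2}(l)\|_\Omega\le 4\sigma(n_2,l)$ on $G$ is valid, and the rest goes through as written.
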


Note that the proof of the theorem, combined with standard arguments from adaptive estimation (e.g., \cite{GineNickl10b}), implies as well that $\hat f_n$ is rate-adaptive in sup-norm loss, that is, for every $t>0$,
\begin{equation} \label{adaptrisk}
\sup_{f: \|A_j(f)-f\|_\mathbf M \le b_2 2^{-jt}}E\sup_{x \in \mathbf M}|\hat f_n(x)-f(x)|
= O(r_n(t)).
\end{equation}
The rate of convergence $r_n(t)$ cannot be improved over classes of functions that are $t$-H\"{o}lder, see for instance \cite{Kle99} in the case $\mathbf M =\mathbb S^d$, and since these H\"{o}lder classes are, up to constants, sets of the form $\{f: \|A_jf-f\|_\infty \le b_2 2^{-jt}\}$ for suitable $b_2$ (see the results in \cite{GP10}), this implies that (\ref{adaptrisk}) is optimal, and that the band $C_n$ in Theorem \ref{adapt} shrinks at the optimal rate in a minimax sense (up to the undersmoothing factor, which will typically be of size $\sqrt {\log n}$). 

Clearly without a sharp evaluation of the probability of the event $\{f \in C_n\}$ Theorem \ref{adapt} is useless for statistical inference. It is known (see \cite{Lowconf}) that adaptive confidence bands for densities on $\mathbb R$ cannot have coverage over a continuous scale $\bigcup_{t>0}\Sigma(t,b_2)$ of H\"{o}lder balls $\Sigma(t,b_2)$. In a way Low's results can be seen as an infinite-dimensional analogue of the pathologies in finite dimensions mentioned above. On the other hand recent results in \cite{GineNickl10a} show that adaptation is possible over 'generic' subsets of $\bigcup_{t>0}\Sigma(t,b_2)$ when densities are estimated on the real line. The idea is that even if some pathologies cannot be avoided there are still exhaustive classes of densities for which adaptation is possible, and we show how this applies to density estimation on $\mathbf M$. 

To this end we assume the following crucial approximation condition. While the upper bound is standard, the quantity occurring in the lower bound can be viewed as an infinite-dimensional analogue to the constant $b_1$ that appeared in Theorem \ref{poly}. Note that whereas $b_1$ is always positive the lower bound in the following condition may fail to hold for any $t$ for a given continuous function $f$, at least for large enough $j$.  We discuss this in Section \ref{disc}. 

\begin{condition} \label{selfsim}
Assume that $f: \mathbf M \to [0,\infty)$ is bounded and let $t, b_2>0$ be real numbers. Suppose that there exists a sequence $b(n)$ such that $0 < b(n) \le b_2$ for every $n \in \mathbb N$ and such that $f$ satisfies, for every $j \in \mathcal J_n$, the inequalities 
\begin{equation}\label{condbias}b(n) 2^{-jt} \le \|A_j(f)-f\|_\Omega \le b_2 2^{-jt}.\end{equation}
\end{condition}

Under this condition we can prove asymptotic coverage of our nonparametric confidence band. We should note that inspection of the proof reveals that this coverage result is 'honest': it holds uniformly over classes of densities satisfying Condition \ref{selfsim}.

\begin{theo} [Asymptotic Coverage] \label{covasy}
Let $\Omega$ be any subset of $\mathbf M$. Suppose $f$ satisfies Condition \ref{selfsim} and that the undersmoothing sequence $u_n \in \mathbb N$ is such that $u_n + \frac{1}{t}\log_2(b(n)) \to \infty$ as $n \to \infty$. Then we have, for every $x>0$,
\begin{equation}\liminf_n \Pr \left\{f(y) \in C_n(x,y) ~\textrm{ for every }~y \in \Omega \right\} \ge 1-e^{-x}.
\end{equation} 
\end{theo}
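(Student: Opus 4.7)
The plan is to exploit the sample splitting, decompose $\|f - \hat f_n\|_\Omega$ into bias and variance, and use Condition \ref{selfsim} to force $\hat j_n + u_n$ to be so large that the bias is absorbed into the $1\%$ slack in $s_n(x) = 1.01\,\sigma(n_1,\hat j_n+u_n,x)$.

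First, since $\hat j_n$ depends only on $\cS_2$ while $f_{n_1}(\hat j_n+u_n)$ is built from the independent subsample $\cS_1$, I would condition on $\hat j_n$ and apply Proposition \ref{pisneed} on $\cS_1$, obtaining with probability at least $1-e^{-x}$ the bound $\|f_{n_1}(\hat j_n+u_n) - A_{\hat j_n+u_n}(f)\|_\Omega \le \sigma(n_1,\hat j_n+u_n,x)$. The triangle inequality then reduces the problem to showing that with probability tending to $1$, $\|A_{\hat j_n+u_n}(f) - f\|_\Omega \le 0.01\,\sigma(n_1,\hat j_n+u_n,x)$.

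The heart of the argument is a sharp lower bound on $\hat j_n$ coming from the lower bound in \eref{condbias}. Let $\Xi_n$ be the event $\{\|f_{n_2}(l) - A_l(f)\|_\Omega \le \sigma(n_2,l,\kappa\log n_2) \text{ for all } l \in \mathcal J_n\}$, which has probability tending to $1$ by Proposition \ref{pisneed} and a union bound over the polynomially-sized grid. On $\Xi_n$, comparing any candidate $j$ with the oracle $l = j_n^*(t)$ in the Lepski criterion \eref{lepski},
\[
\|f_{n_2}(j) - f_{n_2}(j_n^*(t))\|_\Omega \ge \|A_j(f) - A_{j_n^*(t)}(f)\|_\Omega - 2\sigma(n_2,j_n^*(t),\kappa\log n_2).
\]
When $j_n^*(t) - j \ge (1/t)\log_2(2b_2/b(n))$, the lower and upper bounds in \eref{condbias} together give $\|A_j(f) - A_{j_n^*(t)}(f)\|_\Omega \ge \tfrac12 b(n) 2^{-jt}$; since $\sigma(n_2,j_n^*(t),\kappa\log n_2) \asymp b_2 2^{-j_n^*(t) t}$ at the oracle balance point, pushing $j$ down by a further additive constant forces the displayed difference to exceed $4\sigma(n,j_n^*(t))$, violating \eref{lepski}. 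Thus on $\Xi_n$, $\hat j_n \ge j_n^*(t) + (1/t)\log_2 b(n) - C$ for some absolute constant $C$.

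Combining, $\hat j_n + u_n \ge j_n^*(t) + \omega_n$ with $\omega_n := u_n + (1/t)\log_2 b(n) - C \to \infty$ by hypothesis. Using the defining balance $2^{j_n^*(t)(t+d/2)} \asymp \sqrt{n/\log n}$, the bias-to-variance ratio at level $\hat j_n + u_n$ is of order $2^{-\omega_n(t+d/2)} \to 0$, so the bias is eventually dominated by $0.01\,\sigma(n_1,\hat j_n+u_n,x)$. Taking the liminf yields $\liminf_n \Pr\{f \in C_n\} \ge 1 - e^{-x}$. The most delicate point is precisely this lower bound on $\hat j_n$ in the degenerate regime where $b(n)\downarrow 0$: the hypothesis $u_n + (1/t)\log_2 b(n) \to \infty$ is exactly the compensation needed to keep $\omega_n \to \infty$, and extracting it cleanly requires tracking the multiplicative constants in the Lepski comparison rather than the usual order-of-magnitude bookkeeping.
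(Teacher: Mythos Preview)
Your proof is correct and follows essentially the same route as the paper. The paper obtains Theorem \ref{covasy} as an immediate corollary of the nonasymptotic Theorem \ref{cov} (via Remark \ref{eins}): the error term there is $v_n = 2(j_{\max}-m)n_2^{-\kappa} + \mathcal I_n$, the first summand tends to zero, and the hypothesis $u_n + t^{-1}\log_2 b(n) \to \infty$ is precisely what forces $\mathcal I_n = 0$ eventually. Your direct argument reproduces the same mechanism without passing through the nonasymptotic statement: your high-probability lower bound $\hat j_n \ge j_n^*(t) + t^{-1}\log_2 b(n) - C$ on the event $\Xi_n$ is exactly the content of the paper's Lemma \ref{bandcons}(b) (there $m^* \approx t^{-1}\log_2(7b_2/b(n))$, matching your $-t^{-1}\log_2 b(n) + C$), and your final bias-to-variance comparison at level $\hat j_n + u_n$ is the asymptotic unpacking of $\mathcal I_n = 0$. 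One cosmetic point: your ``absolute constant'' $C$ in fact depends on $t$ and $b_2$, but since these are fixed with $f$ this does not affect the conclusion.
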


For instance if one knows that $\liminf_n b(n) >0$ (we shall see generic examples for this below) then \textit{any} undersmoothing sequence $u_n \to \infty$ gives asymptotic coverage of the band. On the other hand if $u_n \to \infty$ then $b_n \to 0$ is admissible and the lower bound requirement in Condition \ref{selfsim} becomes more and more lenient as sample size increases. This result and the discussion in Subsection \ref{disc} below shows that our nonparametric procedure does well asymptotically for 'typical' H\"{o}lder-continuous functions on the unit sphere.

\subsubsection{A Nonasymptotic Coverage Result}

The asymptotic Theorem \ref{covasy} is in fact a consequence of the following finite-sample result. While the stochastic terms are similarly well-behaved as in Theorems \ref{uniform} and \ref{poly}, the presence of nonnegligible approximation error is the reason why the following theorem is more intricate.

\begin{theo} [Finite Sample Coverage] \label{cov} Let $\Omega$ be any subset of $\mathbf M$. Suppose $f$ satisfies Condition \ref{selfsim} and let $m^*:=m_n^*(f)$ be the smallest integer such that $b(n)2^{tm^*} \ge 7b_2.$ Set $m:=m_n(f)=\min(j_n^*(t), m^*)$. Then we have, for every $n \in \mathbb N$ and every $x>0$
\begin{equation}\Pr \left\{f(y) \in C_n(x,y) ~\textrm{ for every }~y \in \Omega \right\} \ge 1-e^{-x}-v_n
\end{equation}
where  $$v_n =2(j_{\max}-m)n_2^{-\kappa }+\mathcal I_n $$
with $$\mathcal I_n=I\left\{100 \sqrt{\frac{n_1}{n_2}}\frac{A(n_2, j_n^*(t)+1, \kappa \log n_2)}{A(n_1,j_n^*(t)+u_n-m,x)}  > 2^{(u_n-m-1)(\frac{d}{2}+t)}\right\},$$
with $\kappa>0$ equal to the constant from after (\ref{lepski}) and where $A(n,l,x)$ was defined in (\ref{AAA}).
\end{theo}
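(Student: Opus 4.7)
The coverage event $\{f(y)\in C_n(x,y)\ \forall\, y\in \Omega\}$ is $\{\|\hat f_n - f\|_\Omega \le s_n(x)\}$, and I bound it via the triangle inequality $\|\hat f_n - f\|_\Omega \le T_1 + T_2$, with stochastic term $T_1 = \|f_{n_1}(\hat j_n+u_n)-Ef_{n_1}(\hat j_n+u_n)\|_\Omega$ and bias term $T_2 = \|A_{\hat j_n+u_n}f - f\|_\Omega$. The structural observation that makes the decomposition useful is that $\hat j_n$ depends only on the subsample $\cS_2$ while $f_{n_1}$ depends only on the independent subsample $\cS_1$: conditioning on $\cS_2$ freezes $\hat j_n$ and Proposition \ref{pisneed} applied to $\cS_1$ gives $T_1 \le \sigma(n_1,\hat j_n+u_n,x)$ with conditional (and hence unconditional) probability at least $1-e^{-x}$. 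Since $s_n(x) = 1.01\,\sigma(n_1,\hat j_n + u_n, x)$, on this event coverage reduces to the purely deterministic inequality
\[ T_2 \le 0.01\, \sigma(n_1, \hat j_n+u_n, x). \]

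The heart of the argument is a high-probability lower bound on the adaptive bandwidth: I show $\hat j_n \ge m$ on an event $\mathcal G_2$ of probability at least $1 - 2(j_{\max}-m) n_2^{-\kappa}$, using union-bounded applications of Proposition \ref{pisneed} to $\cS_2$ with $x=\kappa\log n_2$ at the bandwidths entering the argument. On $\mathcal G_2$ one has $\|f_{n_2}(l) - A_l f\|_\Omega \le \sigma(n_2, l)$ for each relevant $l$. I then argue by contradiction: if $\hat j_n = j < m$ then by definition (\ref{lepski}) with reference level $l = j_n^*(t)+1 \in \mathcal J_n$,
\[ \|f_{n_2}(j) - f_{n_2}(l)\|_\Omega \le 4 \sigma(n_2,l), \]
while the reverse triangle inequality, monotonicity of $l\mapsto\sigma(n_2, l)$, and Condition \ref{selfsim} give
\[ \|f_{n_2}(j) - f_{n_2}(l)\|_\Omega \ge \|A_j f - A_l f\|_\Omega - 2\sigma(n_2,l) \ge b(n) 2^{-jt} - b_2 2^{-lt} - 2 \sigma(n_2, l). \]
The calibration $b(n) 2^{tm^*} \ge 7 b_2$ in the definition of $m^*$ forces the negative term $b_2 2^{-lt}$ to be at most one seventh of the signal $b(n) 2^{-jt}$ for every $j < m \le m^*$, and the defining bias--variance balance of $j_n^*(t)$ --- which matches $b_2 2^{-j_n^*(t)t}$ to a multiple of $\sigma(n_2, j_n^*(t)+1)$ --- then pushes the right-hand side strictly above $4\sigma(n_2, l)$, giving the contradiction.

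On the event $\{\hat j_n \ge m\}$ the ratio $b_2 2^{-(l+u_n)t}/\sigma(n_1, l+u_n, x)$ is monotonically decreasing in $l$ (bias shrinks, variance grows), so the required deterministic inequality $T_2 \le 0.01\, \sigma(n_1,\hat j_n+u_n,x)$ is hardest at the boundary $\hat j_n = m$ and reduces, using $\sigma(n_1,\cdot,x) = A(n_1,\cdot,x)\sqrt{2^{(\cdot)d}/n_1}$ from (\ref{AAA}), to
\[ b_2 2^{-(m+u_n)t} \le 0.01\, A(n_1, m+u_n, x)\sqrt{2^{(m+u_n)d}/n_1}. \]
Substituting the identity defining $j_n^*(t)$ (which expresses $b_2$ via $A(n_2, j_n^*(t)+1, \kappa\log n_2)$, $n_2$, and $2^{j_n^*(t)(t+d/2)}$) and rearranging yields exactly the condition $\mathcal I_n = 0$. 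The delicate point is the Lepski step: a single reference level $l$ must produce a contradiction for every $j < m$ simultaneously, and it is precisely the constant $7$ in the definition of $m^*$ that decouples the lower bound $b(n) 2^{-jt}$ at $j$ from the cancellation $b_2 2^{-lt}$ at $l$ uniformly in $j$. Combining the failure probability $e^{-x}$ from the stochastic step with $v_n = 2(j_{\max}-m)n_2^{-\kappa} + \mathcal I_n$ from the Lepski and balance steps gives the bound stated in the theorem.
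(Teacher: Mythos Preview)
Your overall architecture matches the paper's: split into stochastic and bias terms, control the stochastic term by conditioning on $\cS_2$ and applying Proposition \ref{pisneed} to $\cS_1$, and handle the bias by first localising $\hat j_n$ via Lepski's method and then invoking the balance that defines $j_n^*(t)$.

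The localisation step, however, contains a genuine error: you claim $\hat j_n \ge m$ with high probability, whereas the correct statement (which the paper proves in Lemma \ref{bandcons}) is $\hat j_n \ge j_n^*(t)-m$ together with $\hat j_n \le j_n^*(t)+1$. This is not cosmetic. First, your ``one-seventh'' claim $b_2 2^{-lt}\le\tfrac17\, b(n)2^{-jt}$ requires $l-j\ge m^*$; with $l=j_n^*(t)+1$ and $j<m$ this would need $j_n^*(t)+1-m\ge m^*$, which fails whenever $m^*>(j_n^*(t)+1)/2$. The same argument \emph{does} work if you instead assume $j<j_n^*(t)-m$: with the paper's reference level $l=j_n^*(t)$ one has $l-j>m$, and the paper uses $(b(n)/b_2)2^{t(j_n^*(t)-j)}-1\ge 7-1$ to absorb the total of $6\sigma(n_2,j_n^*(t))$ coming from the Lepski threshold and the two stochastic fluctuations. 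Second, evaluating the worst-case bias at $\hat j_n=m$ produces the inequality $b_2 2^{-(m+u_n)t}\le 0.01\,A(n_1,m+u_n,x)\sqrt{2^{(m+u_n)d}/n_1}$, which does \emph{not} rearrange to $\mathcal I_n=0$: the indicator $\mathcal I_n$ in the theorem contains $A(n_1,j_n^*(t)+u_n-m,x)$ and the exponent $(u_n-m-1)(d/2+t)$, both obtained precisely by taking $\hat j_n=j_n^*(t)-m$ and factoring out $2^{(j_n^*(t)+1)(d/2+t)}$. Third, the probability term $2(j_{\max}-m)n_2^{-\kappa}$ arises in the paper as the sum $2(j_{\max}-j_n^*(t))n_2^{-\kappa}+2(j_n^*(t)-m)n_2^{-\kappa}$, where the first piece comes from $\Pr(\hat j_n>j_n^*(t)+1)$; your argument never mentions this upper deviation, yet a lower bound of the form $\hat j_n\ge m$ would by itself contribute only $2m\,n_2^{-\kappa}$ via the union bound.
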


\begin{remark} \label{eins} [Undersmoothing in finite samples] \normalfont Note first that if $u_n \ge m$, then the fraction on the l.h.s.~of the inequality in the definition of $\mathcal I_n$ is bounded away from zero and infinity. Consequently the tradeoff between the constants $u_n$ and $b(n)$ is such that if $u_n + t^{-1}\log_2(b(n)) \to \infty$ then $\mathcal I_n=0$ for all $n$ from some $n_0$ onwards, which in particular implies Theorem \ref{covasy}. Not surprisingly obtaining coverage in finite samples is more delicate, as $n_0$ depends on $f$: The undersmoothing constant $u_n$ should be chosen so large that $\mathcal I_n=0$ for every $n$. Closer inspection of $\mathcal I_n$ shows that this is possible if an upper bound for $m$ is available, which can be obtained by requiring an apriori lower bound for the sequence $b(n)$ as well as for $t$. The discussion in Section \ref{disc} will show that such apriori bounds can indeed be obtained in relevant cases.
\end{remark}

\begin{remark} \textit{[Admissible lower bounds in Condition \ref{selfsim}] }\normalfont
Another point of view is to start with an undersmoothing sequence $u_n$ and to ask which sequences of $b(n)$'s are admissible to obtain coverage. Assume for simplicity that the sample size is $2n$ and that $n_1=n_2=n$. Let $C_n(\kappa \log n,y)$ be the confidence band from (\ref{band}) with undersmoothing sequence $u_n \in \mathbb N$ and $x=\kappa \log n$. If $f$ satisfies Condition \ref{selfsim} and if $$b(n) \ge 7b_2 \cdot (100)^{t/(t+d/2)} 2^{(-u_n+2)t},$$ then
\begin{equation}\Pr \left\{f(y) \in C_n(\kappa \log n, y) ~\textrm{ for every }~y \in \Omega \right\} \ge 1-(2j_{\max}+3)n^{-\kappa}.
\end{equation}
For instance if $d=2$ and $f$ is at least once differentiable, then finite sample coverage holds for the set of densities that satisfy Condition \ref{selfsim} for $1 \le t < \infty$ and $b(n) \ge b_2 \cdot 2^{8.2-u_n}$. 
\end{remark}

\begin{remark} \label{kap} \textit{[The role of the thresholding constant $\kappa$] }\normalfont
The thresholding constant $\kappa$  plays an important role in the construction of $\hat j_n$. Our results are presented for fixed $\kappa$ without any restriction on this constant. This is an advantage since this constant has to be carefully chosen in applications. Our bounds typically contain a term of the form $n^{-\kappa}$, and one could be tempted to choose $\kappa$ as large as possible, however it is important to notice that choosing $\kappa$ very large will increase the difficulty of cancelling $\mathcal I_n$ in Theorem \ref{cov}. An adaptive choice of this tuning constant is possible but beyond the scope of this paper.
\end{remark}

\subsection{Regularity of Functions on the Sphere and Condition \ref{selfsim}} \label{disc}

Condition \ref{selfsim} can be characterized in terms of classical H\"{o}lder regularity properties of the unknown density $f:\mathbf M \to \mathbb R$. We shall only discuss the case $\mathbf M = \mathbb S^d$, which is the case of primary statistical interest, but all findings below generalize to $\mathbf M$ with suitable modifications.

There are several ways to approximate unknown functions defined on $\mathbb S^d$, but it is a fortiori not clear whether a given method retrieves the natural intuition that the degree of smoothness of a function $f$ is the driving quantity of the approximation properties of $f$. For instance, while $L^2(\mathbf M)$-projections onto spherical harmonics constitute a way of approximating a continuous function $f:\mathbb S^d \to \mathbb R$, it is well known already from the special case $d=1$ that this approximation may diverge at any given point $x$, which is particularly worrying when one is interested in the local or even uniform behavior of the approximation errors. Furthermore the important question arises whether the approximation method allows for very smooth (for instance infinitely differentiable) functions to be approximated in an optimal way. 

The fact that needlets form a tight frame of $L^2(\bS^d)$ implies good approximation properties in that space, similar to those of the spherical harmonics. Moreover, these approximations are also optimal approximands for differentiable and H\"{o}lder-continuous functions in the uniform norm on $\bS^d$ (as follows from the results in \cite{GP10}), so the upper bound in Condition \ref{selfsim} has a natural interpretation in terms of H\"{o}lder-Zygmund-norms on $\bS^d$. 

The lower bound in Condition \ref{selfsim} is more intricate. The results in \cite{jaffard} and \cite{GineNickl10a} for functions on $\mathbb R$ suggest that this condition should be satisfied if $f$ 'attains $t$ as its H\"{o}lder exponent' viewed as a function on the unit sphere (in fact a slightly stronger requirement is necessary). In the simplest case, if a real-valued function $f$ defined on $\mathbb R$ scales like $|x-x_0|^t$ at some point $x_0$ (if $t>1$ a similar property has to hold for the highest existing derivative), then $f$ attains the H\"{o}lder exponent $t$, and the results in \cite{jaffard} imply that 'quasi every' function (in a Baire sense) in $\mathcal C^t(\mathbb R)$ does this. Indeed Proposition 4 in \cite{GineNickl10a} implies that quasi-every function in $\mathcal C^t(\mathbb R)$ satisfies the lower bound in the $\mathbb R$-analogue of Condition \ref{selfsim} (where $A_j(f)$ has to be replaced by a corresponding wavelet projection). Proving such general results in the case where $f$ is defined on the sphere is technical, mostly since needlets only form a tight frame but not an orthonormal basis. We therefore return to the intuition of H\"{o}lder exponents and show that 'typical' $\alpha$-H\"{o}lder functions on $\bS^d$ satisfy Condition \ref{selfsim}: let us consider spherical analogues of functions on $\mathbb R$ that scale like $|x-x_0|$: If $x_0$ is any point in $\mathbb S^d$, then the zonal functions $d_{\mathbb S^d}(x,x_0)$ or $\left(1-\langle x,x_0 \rangle_{d+1}\right)^{1/2}$ are natural candidates for the class $\mathcal C^1(\mathbb S^d)$. More generally 
$$f_\alpha(x)=\left(1-\langle x,x_0 \rangle_{d+1}\right)^{\alpha/2}$$
for $0<\alpha<\infty, \alpha/2 \notin \mathbb N,$ is a natural candidate for $\mathcal C^\alpha(\bS^d)$. We prove in Proposition \ref{lowbd} below $$b_1 2^{-j\alpha} \le \|A_j(f_\alpha)-f_\alpha\|_\infty \le b_2 2^{-j\alpha}$$ for some fixed constants $0<b_1<b_2<\infty$. Note that obviously, for $\alpha = 2k, \; k\in \bN,  f_\alpha (x) = (1- \langle x, x_0\rangle_{d+1})^k=1-  \cos(d_{\bS^{d}} (x,x_0))^k$ is actually a polynomial on $\bS^d$. 

\section{Proofs for Section \ref{linsup}}

\subsection{Proof of Theorem \ref{uniform}}

If $f$ is the volume element of $\mathbf M$, then  
\begin{equation}\label{bias0}
\|A_j(f)-f\|_\infty =0
\end{equation}
for every $j \ge 0$. Clearly by definition of $\hat j_n$
\begin{equation*}
\Pr \left\{\hat j_n \not=0 \right\} \leq \sum_{l \in \mathcal{J}: l > 0} {\Pr} \left \{ \left\|f_{n_2}(0) - f_{n_2}(l) \right\|_\infty > 4\sigma(n,l) \right \}.
\end{equation*}
Now since $Ef_n(l)=A_l(f)=f$ for every $l \ge 0$, the $l$-th probability is bounded by
\begin{eqnarray*}  
&& {\Pr} \left\{\left\|f_{n_2}(0) -  f_{n_2}(l) -Ef_{n_2}(0) + Ef_{n_2}(l) \right\|_\infty > 4\sigma(n,l) \right\} \notag \\
&& \le \Pr \left \{\left\|f_{n_2}(0) -Ef_{n_2}(0) \right\|_\infty > 2 \sigma(n,l)\right\} \notag + \Pr \left \{\left\|f_{n_2}(l) -Ef_{n_2}(l) \right\|_\infty > 2\sigma(n,l)  \right\} \le 2n_2^{-2\kappa }
\end{eqnarray*}
in view of Proposition \ref{pisneed}, so that $\Pr \{\hat j_n \not=0 \} \leq 2j_{\max} n_2^{-2\kappa }$ follows. To prove the second claim of the theorem, we have from independence of $\hat j_n$ and $f_{n_1}$, from (\ref{bias0}) and from Proposition \ref{pisneed}
\begin{eqnarray*}
&& \Pr \left\{f(y) \in C_n(x,y) ~\textrm{ for every }~y \in \mathbf M \right\} \\
&& = \Pr \left\{\sup_{y \in \mathbf M}\left|\hat f_n (y)-f(y)\right| \le s_n(x) \right \} \\
&& \ge 1 - \Pr \left\{\sup_{y \in \mathbf M}\left|f_{n_1}(\hat j_n+u_n,y)-E_1f_n(\hat j_n+u_n,y)\right| >  \sigma(n_1,\hat j_n+u_n,x) \right\} \\
&& = 1- \sum_{0\le l \le j_{\max} } \Pr \left \{\|f_{n_1}(l+u_n, \cdot)-E_1f_{n_1}(l+u_n, \cdot)\|_\infty >  \sigma(n_1,l+u_n,x) \right\} \Pr \{\hat j_n =l \} \\
&& \ge 1- e^{-x} \sum_{0 \le l \le j_{\max} } \Pr \{\hat j_n =l\} = 1- e^{-x}.
\end{eqnarray*}
The last claim of Theorem \ref{uniform} follows from the first and definition of $\sigma(n,l,x)$.

\subsection{Proof of Theorem \ref{poly}}

Since $j_{\max} \to \infty$ as $n \to \infty$ and since this theorem is of an asymptotic nature we assume $J \le j_{\max}$ in what follows. We recall from (\ref{polyid}) that $f \in E_{2^{2J-1}}$ implies  
\begin{equation}\label{b0}
\|A_l(f)-f\|_\infty =0
\end{equation}
for every $l \ge J$. Then
\begin{equation*}
\Pr \left\{\hat j_n > J \right\} \leq \sum_{l \in \mathcal{J}: l > J} {\Pr} \left \{ \left\|f_{n_2}(J) - f_{n_2}(l) \right\|_\infty > 4\sigma(n,l) \right \},
\end{equation*}
and the $l$'th summand is bounded by 
\begin{eqnarray*}  
&& {\Pr} \left\{\left\|f_{n_2}(J) -  f_{n_2}(l) -Ef_{n_2}(J) + Ef_{n_2}(l) \right\|_\infty > 4\sigma(n,l) \right\} \notag \\
&& \le \Pr \left \{\left\|f_{n_2}(J) -Ef_{n_2}(J) \right\|_\infty > 2 \sigma(n,l)\right\} \notag + \Pr \left \{\left\|f_{n_2}(l) -Ef_{n_2}(l) \right\|_\infty > 2\sigma(n,l)  \right\} \le 2n_2^{-2\kappa }
\end{eqnarray*}
in view of (\ref{b0}) and Proposition \ref{pisneed}. 

For integer $l < J-1$ (so that $2^l<2^{J-1}$) we have 
$$\|A_l(f)-f\|_\infty \ge \inf_{p \in E_{2^{J-2}}}\|p-f\|_\infty \equiv b_1>0$$
since $A_l(f) \in E_{2^{2J-2}}$ and since $E_{2^{2J-2}}$ is a closed proper subspace of $E_{2^{2J-1}}$. By definition we have
\begin{equation} \label{lag}
\Pr (\hat j_n = l) \leq  \Pr \left( \left\|f_{n_2}(l) - f_{n_2}(J) \right\|_\infty \le  4\sigma(n, J) \right).
\end{equation}
The triangle inequality and (\ref{b0}) now give $$\left\|f_{n_2}(l) - f_{n_2}(J) \right\|_\infty \ge \|A_l(f)-f\|_\infty - \left\|f_{n_2}(l) - Ef_{n_2}(l)-f_{n_2}(J)+Ef_{n_2}(J) \right\|_\infty$$
so that the probability in (\ref{lag}) is bounded by
\begin{eqnarray*}
&& \Pr \left( \left\|f_{n_2}(l) - Ef_{n_2}(l)-f_{n_2}(J)+Ef_{n_2}(J) \right\|_\infty \ge b_1 - 4\sigma(n,J) \right) \le \\
&&  \Pr \left(\left\|f_{n_2}(l) - Ef_{n_2}(l) \right\|_\infty \ge \frac{b_1}{2} -2\sigma(n, J) \right) + \Pr \left(\left\|f_{n_2}(J) -Ef_{n_2}(J) \right\|_\infty \ge \frac{b_1}{2} - 2\sigma(n, J) \right).
\end{eqnarray*}
For $n$ large enough depending on $b_1$ we have $2\sigma(n,J) \le b_1/4$ so that Proposition \ref{pisneed} implies, for $J$ fixed, $\Pr\{\hat j_n < J-1\} \le \sum_{0 \le l <J-1} \Pr\{\hat j_n = l\} \le 2Je^{-cn}$ for some constant $c>0$ depending on $b_1, J$ and those constants appearing in the definition of $\sigma(n,l,x)$ that do not depend on $n,l$. Summarizing we deduce $\Pr \{\hat j_n \notin [J-1,J] \} \leq 2j_{\max} n_2^{-2\kappa } + 2Je^{-cn}$ for $n$ large enough. To prove coverage we proceed as in Theorem \ref{uniform}, noting $u_n>1$,
\begin{eqnarray*}
&& \Pr \left\{f(y) \in C_n(x,y) ~\textrm{ for every }~y \in \mathbf M \right\} \\
&& \ge 1 - \Pr \left\{\sup_{y \in \mathbf M}\left|f_{n_1}(\hat j_n+u_n,y)-f(y)\right| >  \sigma(n_1,\hat j_n+u_n,x) \right\} \\
&& \ge 1- 2J e^{-cn} - \\
&& ~~ \sum_{J-1 \le l \le j_{\max} } \Pr \left \{\|f_{n_1}(l+u_n, \cdot)-E_1f_{n_1}(l+u_n, \cdot)\|_\infty >  \sigma(n_1,l+u_n,x) \right\} \Pr \{\hat j_n =l \} \\
&& \ge 1 - 2Je^{-cn} - e^{-x} \sum_{J-1 \le l \le j_{\max} } \Pr \{\hat j_n =l\}  \ge 1- e^{-x} - 2J e^{-cn}
\end{eqnarray*}
where we used (\ref{b0}) and Proposition \ref{pisneed}. The last claim of the theorem is proved as in Theorem \ref{uniform}.

\subsection{Proof of Theorems \ref{covasy} and \ref{cov}} \label{prf1}

We first prove Theorem \ref{cov}. For $f$ satisfying Condition \ref{selfsim} there exists a unique $t:=t(f)$ such that $f$ satisfies Condition \ref{selfsim} for this $t$. Define 
\begin{equation} \label{star}
B(j,t) = b_2 2^{-jt},~~ j_n^*(t)=\min\left\{j\in {\mathcal J \setminus \{0\}}: B(j,t) \le \sigma(n_2,j) \right\}-1.
\end{equation} 
If no $j \in \mathcal J$ exists such that $B(j,t) \le \sigma(n_2,j)$ we set $j_n^*(t)=j_{\max}-1$. We shall assume without loss of generality that $b_2$ is large enough such that $b_2 \ge \sigma (1,0)$. In this way $B(j_n^*(t))\ge \sigma(n_2,j_n^*(t))$ also holds when $j_n^*(t)=0$.

It is easy to see that $j_n^*(t)$ satisfies 
\begin{equation} \label{linrat}
2^{j_n^*(t)} \simeq \left(\frac{n_2}{\log n_2}\right)^{\frac{1}{2t+d}},
\end{equation} 
so is a 'rate optimal' resolution level for estimating $f$ satisfying Condition \ref{selfsim} for the given $t$. The constants in the definition of $j_n^*(t)$ depend only on $b_2, t, a,d, k_2$ and $\|f\|_\infty$. 

\begin{lemma} \label{bandcons} 
a)
For every $n \in \mathbb N$,  
 \begin{equation}\label{optrat1}\Pr (\hat j_n >  j_n^*(t)+1)\le
 2(j_{\max}-j_n^*(t))n_2^{-\kappa} .\end{equation}

b) Let $m:=\min(j_n^*(t), m^*)$ where $m^*$ is the smallest integer such that $(b(n)/b_2) 2^{tm^*} \ge 7.$
Then, for every $j \in \mathcal J$ satisfying $0\le j<j_n^*(t)-m$ and every $n \in \mathbb N$ we have $\Pr (\hat j_n=j) \le 2n_2^{-\kappa }.$ 
As a consequence, for every $n \in \mathbb N$,
\begin{eqnarray} \label{optrat}
  \Pr \left(\hat j_n < j^*_n(t) -m \right)\le 2(j_n^*(t)-m)n_2^{-\kappa }
\end{eqnarray}

\end{lemma}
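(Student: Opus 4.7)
The common strategy is to split $f_{n_2}(j) - f_{n_2}(l)$ by the triangle inequality around the deterministic centers $A_j(f), A_l(f)$: the bias piece $\|A_j(f) - A_l(f)\|_\Omega$ is controlled via Condition \ref{selfsim}, while each stochastic piece is controlled by Proposition \ref{pisneed} applied with $x = \kappa\log n_2$ (giving tails of size $n_2^{-\kappa}$). For part (a) the bias at level $j_n^*(t)+1$ is dominated by $\sigma(n_2,l)$, so a failure of the Lepski test at $j_n^*(t)+1$ forces a large stochastic fluctuation; for part (b) the bias at small $j$ dominates $\sigma(n_2,j_n^*(t))$ by a factor large enough that the Lepski constraint at $l=j_n^*(t)$ cannot be satisfied except on a rare concentration event.

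For part (a), since $\hat j_n$ is the smallest $j \in \mathcal J$ passing the Lepski test, $\{\hat j_n > j_n^*(t)+1\} \subseteq \{\text{test fails at } j_n^*(t)+1\}$. Union-bounding over the at most $j_{\max}-j_n^*(t)$ admissible $l$'s and writing
\begin{equation*}
\|f_{n_2}(j_n^*(t){+}1) - f_{n_2}(l)\|_\Omega \le \|f_{n_2}(j_n^*(t){+}1) - A_{j_n^*(t)+1}f\|_\Omega + \|A_{j_n^*(t)+1}f - A_l f\|_\Omega + \|f_{n_2}(l) - A_l f\|_\Omega,
\end{equation*}
the middle term is bounded by $2B(j_n^*(t){+}1,t) \le 2\sigma(n_2,j_n^*(t){+}1) \le 2\sigma(n_2,l)$ using Condition \ref{selfsim}, the defining inequality of $j_n^*(t)$ in (\ref{star}), and monotonicity of $\sigma(n_2,\cdot)$. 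Hence a violation of the test at level $l$ forces at least one of the two stochastic norms to exceed $\sigma(n_2,l)$, each such event having probability at most $n_2^{-\kappa}$ by Proposition \ref{pisneed}; summation yields (\ref{optrat1}).

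For part (b), fix $0 \le j < j_n^*(t) - m$, which necessarily forces $m = m^*$ (the case $m = j_n^*(t)$ leaves no admissible $j \ge 0$) and $j \le j_n^*(t) - m^* - 1$. Since $\hat j_n = j$ implies that the Lepski condition holds at $j$, in particular $\|f_{n_2}(j) - f_{n_2}(j_n^*(t))\|_\Omega \le 4\sigma(n_2, j_n^*(t))$ for the choice $l = j_n^*(t) \in \mathcal J$. The reverse triangle inequality and Condition \ref{selfsim} then give
\begin{equation*}
\|A_j f - A_{j_n^*(t)} f\|_\Omega \ge b(n) 2^{-jt} - b_2 2^{-j_n^*(t) t} > (7 \cdot 2^t - 1)\, b_2\, 2^{-j_n^*(t) t} \ge 6\, \sigma(n_2, j_n^*(t)),
\end{equation*}
where the strict inequality uses $b(n) 2^{m^* t} \ge 7b_2$ together with $j \le j_n^*(t) - m^* - 1$, and the final step uses $b_2 2^{-j_n^*(t)t} = B(j_n^*(t),t) \ge \sigma(n_2, j_n^*(t))$. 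Combined with the Lepski bound, this forces $\|f_{n_2}(j) - A_j f\|_\Omega + \|f_{n_2}(j_n^*(t)) - A_{j_n^*(t)} f\|_\Omega > 2\sigma(n_2, j_n^*(t))$, so at least one summand exceeds $\sigma(n_2, j_n^*(t))$. Monotonicity of $\sigma(n_2,\cdot)$ (to compare at level $j < j_n^*(t)$) and Proposition \ref{pisneed} bound each such probability by $n_2^{-\kappa}$, giving $\Pr(\hat j_n = j) \le 2n_2^{-\kappa}$; the bound (\ref{optrat}) follows by union-bounding over the at most $j_n^*(t) - m$ eligible $j$.

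The main delicacy is the constant bookkeeping: the factor $7$ in the definition of $m^*$ is precisely what is needed so that, after subtracting the upper bias $b_2 2^{-j_n^*(t)t}$, one retains a strict $6\sigma$ gap, which in turn exactly accommodates the $4\sigma$ Lepski threshold plus a $2\sigma$ budget shared between the two stochastic fluctuation terms. Monotonicity of $\sigma(n_2,\cdot)$ in its second argument (the reason for replacing $\bar\sigma$ by $\sigma$ in (\ref{AAA})) is used repeatedly to compare fluctuation bounds across resolution levels, and the boundary case $j_n^*(t) = 0$ is absorbed by the convention $b_2 \ge \sigma(1,0)$ introduced just after (\ref{star}).
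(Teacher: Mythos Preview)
Your proof is correct and follows essentially the same approach as the paper: in both parts you decompose $f_{n_2}(j)-f_{n_2}(l)$ into bias and stochastic pieces, control the bias via Condition~\ref{selfsim} and the definition of $j_n^*(t)$, and bound each stochastic term by Proposition~\ref{pisneed} at $x=\kappa\log n_2$. The only cosmetic difference in part (b) is that you exploit the extra factor $2^t$ from $j\le j_n^*(t)-m^*-1$ to obtain a bias lower bound of $(7\cdot 2^t-1)B(j_n^*(t),t)>6\sigma$, whereas the paper simply uses $2^{t(j_n^*(t)-j)}>2^{tm}$ to get the same $2\sigma$ stochastic budget; the constant accounting and the use of monotonicity of $\sigma(n_2,\cdot)$ are identical.
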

\begin{proof}
Since this lemma only involves the sample $\cS_2$, we set $n=n_2$ for notational simplicity. We also put $j_n^{*+}=j_n^{*}(t)+1$. If $j_n^{*+}=j_{\max}$ Part a) is proved. Otherwise one has
\begin{equation*}
\Pr (\hat j_n > j_n^{*+}) \leq  \sum_{l \in \mathcal{J}: l > j_n^{*+}} \Pr \left( \left\|f_n(j_n^{*+}) - f_n(l) \right\|_\Omega > 4\sigma(n,l) \right).
\end{equation*}
We first observe that by Condition \ref{selfsim} (noting also $Ef_n(j)=A_j(f)$)
\begin{equation*}  
\left\|f_n(j_n^{*+}) - f_n(l) \right \|_\Omega \leq \left\| f_n(j_n^{*+}) - f_n(l) - Ef_n(j_n^{*+}) + Ef_n(l)  \right\|_\Omega + B(j_n^{*+},t) + B(l,t), 
\end{equation*} 
and that $$B(j_n^{*+},t) + B(l,t) \leq 2 B(j_n^{*+},t) \leq 2\sigma(n,j_n^{*+}) \le 2\sigma(n,l) $$ by definition of $j_n^*(t)$ and since $l > j_n^{*+}$. Consequently, the $l$-th probability in the last sum is bounded by
\begin{eqnarray*}  
&& \Pr \left(\left\|f_n(j_n^{*+}) -  f_n(l) -Ef_n(j_n^{*+}) + Ef_n(l) \right\|_\Omega > 2\sigma(n,l) \right) \notag \\
&& \le \Pr \left(\left\|f_n(j_n^{*+}) -Ef_n(j_n^{*+}) \right\|_\Omega >  \sigma(n,l)\right) \notag  + \Pr \left(\left\|f_n(l) -Ef_n(l) \right\|_\Omega > \sigma(n,l)  \right) \le 2n^{-\kappa }
\end{eqnarray*}
where we have used Proposition \ref{pisneed}. 

To prove the second claim, fix $j< j_n^*(t)-m$. Clearly we only have to consider the case $m=m^*$. Observe that 
\begin{equation} \label{lwbd}
\Pr (\hat j_n = j) \leq  \Pr \left( \left\|f_n(j) - f_n(j^*_n(t)) \right\|_\Omega \le  4\sigma(n, j^*_n(t)) \right).
\end{equation}
Now using Condition \ref{selfsim} and the triangle inequality we deduce $$\left\|f_n(j) - f_n(j^*_n(t)) \right\|_\Omega \ge \frac{b(n)}{b_2} B(j,t) - B(j_n^*(t),t) - \left\|f_n(j) - Ef_n(j)-f_n(j^*_n(t))+Ef_n(j^*_n(t)) \right\|_\Omega$$
so that the probability in (\ref{lwbd}) is bounded by
$$\Pr \left( \left\|f_n(j) - Ef_n(j)-f_n(j^*_n(t))+Ef_n(j^*_n(t)) \right\|_\Omega \ge  \frac{b(n)}{b_2} B(j,t) - B(j_n^*(t),t)- 4\sigma(n,j^*_n(t)) \right).$$
By definition of $j_n^*(t)$ and $B(j,t)$, we have 
\begin{eqnarray*}
\frac{b(n)}{b_2} B(j,t) - B(j_n^*(t),t)&=&\frac{b(n)}{b_2} 2^{t(j_n^*(t)-j)}B(j_n^*(t),t) - B(j_n^*(t),t) \\ 
&>& \left(\frac{b(n)}{b_2} 2^{tm}-1\right)B(j_n^*(t),t) \\
\end{eqnarray*}
as well as $B(j_n^*(t),t) \ge \sigma(n,j_n^*(t))\ge  \sigma(n, j)$ so that the last probability is bounded by 
\begin{eqnarray*}
&& \Pr \left( \left\|f_n(j) - Ef_n(j)-f_n(j_n^*(t))+Ef_n(j_n^*(t)) \right\|_\Omega \ge \left[\left(\frac{b(n)}{b_2} 2^{tm}-1\right)  - 4\right] \sigma(n, j_n^*(t)) \right) \\
&& \le  \Pr \left( \left\|f_n(j) - Ef_n(j) \right\|_\Omega \ge2^{-1} \left(\frac{b(n)}{b_2} 2^{tm}-5 \right) \sigma(n, j) \right) \\
&& ~~ + \Pr \left( \left\|f_n(j_n^*(t)) -Ef_n(j_n^*(t)) \right\|_\Omega \ge 2^{-1} \left(\frac{b(n)}{b_2} 2^{tm}-5\right) \sigma(n, j_n^*(t)) \right)
\end{eqnarray*}
By definition of $m$, the term in brackets is greater than or equal to two, and then -- using Proposition \ref{pisneed} -- the last two probabilities do not exceed $2n^{-\kappa }$. Moreover, 
\begin{eqnarray*} 
\Pr \left(\hat j_n < j^*_n(t) -m \right) &=& \sum_{0 \le j<j_n^*(t)-m} \Pr (\hat j_n = j) \le 2\sum_{0 \le j<j_n^*(t)-m}n^{-\kappa } \notag \\
&\le& 
2(j_n^*(t)-m)n^{-\kappa },
\end{eqnarray*}
which completes the proof.
\end{proof}

\medskip

Combining (\ref{optrat1}) with (\ref{optrat}) we have, for every $n \in \mathbb N$ and for $m$ as in the lemma
\begin{eqnarray} \label{conserv}
 \Pr \{\hat j_n \notin [j_n^*(t)-m, j_n^*(t)+1] \} &\le& 
2[(j_n^*(t)-m)+(j_{\max}-j_n^*(t))]n_2^{-\kappa } \notag \\
&=& 2(j_{\max}-m)n_2^{-\kappa }:= Z_n, 
\end{eqnarray}
a fact we shall use below.

We now prove Theorem \ref{cov}. Denoting by $E_1$ expectation w.r.t.~$\cS_1$, one has by definition of $s_n(x)$ that 
\begin{eqnarray*}
&& \Pr \left\{f(y) \in C_n(x,y) ~\textrm{ for every }~y \in \Omega \right\} \\
&& = \Pr \left\{\sup_{y \in \Omega}\left|\hat f_n (y)-f(y)\right| \le s_n(x) \right \} \\
&& = 1 - \Pr \left\{\sup_{y \in \Omega}\left|\hat f_n(y)-E_1\hat f_n(y) + E_1\hat f_n(y) - f(y)\right| > 1.01\sigma(n_1, \hat j_n+u_n,x) \right \} \\
&& \ge 1 - \Pr \left \{\|\hat f_n-E_1\hat f_n\|_\Omega > \sigma(n_1, \hat j_n+u_n,x) \right\} - \Pr \left \{\|E_1 \hat f_n -f\|_\Omega > 0.01 \sigma(n_1, \hat j_n+u_n,x) \right \} \\
&& = 1- I -II
\end{eqnarray*}
About term $I$: This probability equals, by independence of $f_{n_1}(j,y)$ and $\hat j_n$,
\begin{eqnarray*}
&& \Pr \left \{\|f_{n_1}(\hat j_n+u_n, \cdot)-E_1f_{n_1}(\hat j_n+u_n, \cdot)\|_\Omega >  \sigma(n_1,\hat j_n+u_n,x) \right\} \\
&& = \sum_{0 \le l \le j_{\max}} \Pr \left \{\|f_{n_1}(l+u_n, \cdot)-E_1f_{n_1}(l+u_n, \cdot)\|_\Omega >  \sigma(n_1,l+u_n,x) \right\} \Pr \{\hat j_n =l\} \\
&& \le e^{-x} \sum_{0 \le l \le j_{\max}} \Pr \{\hat j_n =l\} = e^{-x}
\end{eqnarray*}
in view of Proposition \ref{pisneed}. 

About term $II$: Using Condition \ref{selfsim} as well as (\ref{conserv}), and recalling (\ref{AAA}), this quantity equals
\begin{eqnarray*}
&& \Pr \left\{\left\|Ef_{n_1}(\hat j_n+u_n)-f \right\|_\Omega > 0.01\sigma(n, \hat j_n+u_n,x) \right\}  \\
&& \le \Pr\left\{100 b_22^{-t(\hat j_n +u_n)} > \sigma(n_1,\hat j_n+u_n,x)\right\}   \\
&&= \Pr\left\{ 100 \sqrt{n_1}b_2 >  2^{(\hat j_n+u_n)(\frac{d}{2}+t)} A(n_1,\hat j_n+u_n, x) \right\} \\
&& \le \sum_{j_n^*(t)-m \le l \le j_n^*(t)+1} I\left\{ 100 \sqrt{n_1}b_2 >  2^{(l+u_n)(\frac{d}{2}+t)}A(n_1,l+u_n,x)\right\}\Pr\{\hat j_n=l\} + Z_n 
\end{eqnarray*}
\begin{eqnarray*}
&& \le I\left\{ \frac{100 b_2\sqrt{n_1}}{A(n_1,j_n^*(t)+u_n-m,x)} > 2^{(j_n^*(t)+1)(\frac{d}{2}+t)} 2^{(u_n-m-1)(\frac{d}{2}+t)}\right\} + Z_n \\
&& \le I\left\{100 \sqrt{\frac{n_1}{n_2}}\frac{A(n_2,,j_n^*(t)+1,\kappa \log n_2)}{A(n_1,j_n^*(t)+u_n-m,x)}  > 2^{(u_n-m-1)(\frac{d}{2}+t)}\right\} + Z_n\\
\end{eqnarray*}
where we have used that (\ref{star}) implies $$2^{(j^*_n(t)+1)(\frac{d}{2}+t)} \ge \frac{\sqrt n_2 b_2}{A(n_2,j_n^*(t)+1,\kappa \log n_2)}$$ in the last inequality. This proves Theorem \ref{cov}. Theorem \ref{covasy} follows from Theorem \ref{cov} using that tradeoff between $b(n)$ and $u_n$ through the constant $m$ (cf.~also Remark \ref{eins}).

\subsection{Proof of Theorem \ref{adapt}}

The size of the band is $2.02\sigma(n_1,\hat j_n+u_n,x).$ In view of (\ref{optrat1}) -- whose proof only requires the hypotheses of Theorem \ref{adapt} -- we have $\hat j_n \le j_n^*(t)+1$ with probability larger than $1-2 (j_{\max}-j_n^*(t))n_2^{-\kappa }$, so the size of this band is less than or equal to $2.02 \sigma(n_1, j^*_n(t)+u_n+1,x)$ with the same probability bound. The second claim of Theorem \ref{adapt} then follows from definition of $\sigma(n,l,x)$ (cf.~(\ref{AAA})) and of $j_n^*(t)$ (cf.~(\ref{linrat})).

\section{Precise Validity of Condition \protect{\eref{condbias}}}
In this section we investigate  examples of functions verifying condition \eref{condbias} if $\mathbf M = \mathbb S^d$. Let us recall that the projection kernel on $\cH_k(\bS^{d})$ is given by 
$$ L_k(\langle x, y\rangle_{d+1}) = \frac 1{|\bS^{d}|} \left(1+ \frac k\nu\right) C^\nu_k (\langle x, y\rangle_{d+1}), \quad \nu=\frac{d-1}2$$
where
 $ C^\nu_k(x)$ is the corresponding Gegenbauer polynomial. For ease of notation we shall redefine $A_j(x,y) = \sum_{k<2^j} a(k/2^j) L_k(x,y)$, to be in line with the notation in \cite{NPW, pnarco, density}. [For $j \to \infty$ this modification is immaterial.] We
shall use the classical  symbol $$ \forall k \in \bN,  (a)_k = a(a+1)..(a+k-1)  (= \frac{\Gamma(a+k)}{\Gamma(a)}  \hbox{if} ~ -a\not \in \bN),~ (a)_0=1.$$ The following Olindes Rodrigues formula defines the Gegenbauer polynomials and is useful for integration by parts: for $t \in I=[-1,1]$
\begin{equation}\label{OR}
 C_k^\nu(t)=(-1)^k \frac 1{k! 2^k} \frac{ (2\nu)_k}{ (\nu +\frac 12)_k} \frac{ D^k \{(1-t^2)^k) \omega^\nu(t) \}}{\omega^\nu(t)}, \quad \omega^\nu(t) =(1-t^2)^{\nu-1/2}.
\end{equation}
\begin{prop} \label{lowbd}
For $0<\alpha <\infty, \; \frac \alpha2 \not \in \bN,$ we define the following functions:
 $$f_\alpha (x) =\left( \sqrt{1- \langle x, x_0\rangle_{d+1}}\right)^\alpha= \left(\sqrt{1-  \cos(d_{\bS^{d}} (x,x_0))}\right)^\alpha$$ where $d_{\bS^d}$ is the geodesic distance on $\mathbb S^d$. Then there exist constants $ c_1>0, \; c_2>0$ independent of $j$ such that 
$$ c_1 2^{-j\alpha} \leq \| A_j(f_\alpha)-f_\alpha\|_\infty  \leq c_2 2^{-j\alpha}.$$
\end{prop}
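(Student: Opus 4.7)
The idea is to reduce the problem to a one-dimensional Gegenbauer expansion and to track the signs of the coefficients. Write $g(t) = (1-t)^{\alpha/2}$, so that $f_\alpha(x) = g(\langle x, x_0\rangle_{d+1})$ is zonal, and the kernel $A_j(x,y)$ depends only on $\langle x, y\rangle_{d+1}$; hence $A_j(f_\alpha)(x) - f_\alpha(x) = H_j(\langle x_0, x\rangle_{d+1})$ for some $H_j:[-1,1]\to\R$. Expanding $g(t) = \sum_k \beta_k C_k^\nu(t)$ with $\nu = (d-1)/2$ and using that $L^2$-projection onto $\cH_k(\bS^d)$ extracts the $k$-th Gegenbauer component, one obtains
$$H_j(t) = -\sum_{k\ge 2^{j-1}} (1 - a(k/2^j))\,\beta_k\, C_k^\nu(t),$$
where the lower limit of summation uses $a \equiv 1$ on $[0,1/2]$.

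The core of the proof is an asymptotic for $\beta_k$ with an explicit, non-cancelling sign. Plugging the Olindes Rodrigues formula \eref{OR} into $\int_{-1}^1 g\, C_k^\nu \omega^\nu\,dt$ and integrating by parts $k$ times (the boundary terms vanish because of the $(1-t^2)^k$ factor) reduces the coefficient to $\int_{-1}^1 g^{(k)}(t)(1-t^2)^k\omega^\nu(t)\,dt$. Using $g^{(k)}(t) = (-1)^k\Gamma(\tfrac{\alpha}{2}+1)/\Gamma(\tfrac{\alpha}{2}-k+1)\cdot(1-t)^{\alpha/2-k}$ and a change of variables $u=(1+t)/2$, what remains is an explicit Beta integral. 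The algebraic identity
$$\frac{(-1)^k}{\Gamma(\tfrac{\alpha}{2}-k+1)} = -\frac{\sin(\pi\alpha/2)}{\pi}\,\Gamma(k-\tfrac{\alpha}{2}),$$
obtained from Euler's reflection formula and valid precisely because $\alpha/2\notin\bN$, shows that the $(-1)^k$ coming from $g^{(k)}$ cancels the oscillating sign hidden in $1/\Gamma(\alpha/2-k+1)$. Combined with Stirling on the resulting product of Gamma ratios, and with $C_k^\nu(1) = (2\nu)_k/k! \sim k^{2\nu-1}$, this yields
$$\beta_k\, C_k^\nu(1) \sim -c_\alpha\sin(\pi\alpha/2)\,k^{-\alpha-1},\qquad k\to\infty,$$
for some constant $c_\alpha > 0$ depending only on $\alpha$ and $d$. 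In particular $\beta_k C_k^\nu(1)$ is of constant sign for all $k\ge k_0$.

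From here both bounds follow from direct estimates on $H_j$. For the upper bound, the classical inequality $|C_k^\nu(t)|\le C_k^\nu(1)$ on $[-1,1]$ gives
$$\|H_j\|_\infty \le \sum_{k\ge 2^{j-1}}(1-a(k/2^j))|\beta_k|C_k^\nu(1) \lesssim \sum_{k\ge 2^{j-1}}k^{-\alpha-1} \asymp 2^{-j\alpha}.$$
For the lower bound I would evaluate at $t = 1$, i.e.\ $x = x_0$ (where $f_\alpha(x_0) = 0$): since the terms $\beta_k C_k^\nu(1)$ share a sign for $k\ge k_0$ and $(1-a(k/2^j))=1$ for $k\ge 2^j$, no cancellation can occur, and
$$|H_j(1)| \ge \sum_{k\ge 2^j}|\beta_k|C_k^\nu(1) \gtrsim 2^{-j\alpha}.$$
The finitely many terms with $k < k_0$ contribute only for very small $j$ and can be absorbed into $c_1$. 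Absolute summability of $\sum_k |\beta_k| C_k^\nu(1)$ (from $\alpha>0$) also justifies the identity $0=g(1)=\sum_k \beta_k C_k^\nu(1)$ used implicitly in the reduction.

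The principal obstacle is the sign analysis in the second paragraph: one must verify that the two occurrences of $(-1)^k$ (one from $g^{(k)}$, one inside Euler's reflection formula) cancel exactly, so that $\beta_k$ has constant sign and the lower-bound summation does not collapse. The hypothesis $\alpha/2\notin\bN$ is precisely what is required for $\sin(\pi\alpha/2)\ne 0$; otherwise $f_\alpha$ would be a genuine polynomial and the approximation error would vanish identically for $j$ large, entirely consistent with the leading term becoming zero in that degenerate case.
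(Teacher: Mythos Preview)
Your proof is correct. For the lower bound, your approach and the paper's coincide at the level of strategy: both evaluate at $x = x_0$ (i.e., $t=1$), compute the Gegenbauer coefficients of $g(t)=(1-t)^{\alpha/2}$ via the Rodrigues formula and integration by parts, and exploit that $(-\alpha/2)_k$ has constant sign for $k$ large (so the tail terms cannot cancel). Your execution is more economical, however: you read off the asymptotic $\beta_k C_k^\nu(1) \sim c_\alpha\, k^{-\alpha-1}$ directly from Stirling on the explicit Gamma-function expression and sum the tail $\sum_{k\ge 2^j}|\beta_k|C_k^\nu(1)\asymp 2^{-j\alpha}$. The paper instead writes $A_j f_\alpha(x_0)$ as the partial sum $\sum_{k<2^j} a(k/2^j)(1+k/\nu)u_k$, sandwiches it between two unweighted partial sums using the sign structure, and then computes $\sum_{k\le n}(1+k/\nu)u_k$ in closed form via the summation identity
\[
\sum_{0\le k\le n}\Big(1+\tfrac{k}{\nu}\Big)C_k^\nu(x)=\frac{(n+2\nu)C_n^\nu(x)-(n+1)C_{n+1}^\nu(x)}{2\nu(1-x)},
\]
which converts the sum into two inner products against $(1-x)^{\alpha/2-1}$; those are re-computed via Rodrigues and shown to behave like $n^{-\alpha}$. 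Both routes yield the same estimate, but yours avoids the Christoffel--Darboux detour entirely.

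For the upper bound the two arguments are genuinely different. The paper never touches the Gegenbauer expansion there: for $0<\alpha\le 1$ it bounds $|f_\alpha(y)-f_\alpha(x)|\lesssim d_{\mathbb S^d}(x,y)^\alpha$ directly and integrates against the localized kernel $|A_j(\cos\theta)|\lesssim 2^{jd}(1\wedge(2^j\theta)^{-K})$; for $\alpha>1$ it passes to the torus, shows $\phi_\alpha(\theta)=(1-\cos\theta)^{\alpha/2}\in\mathcal C^\alpha(\mathbb T)$, and invokes Jackson-type polynomial approximation. Your argument, resting on the classical inequality $|C_k^\nu(t)|\le C_k^\nu(1)$ together with the tail decay $|\beta_k|C_k^\nu(1)\lesssim k^{-\alpha-1}$, is uniform in $\alpha$ and avoids the case distinction. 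The trade-off is that your method is tied to the explicit algebraic form of $f_\alpha$ (one needs the exact coefficient asymptotics), whereas the paper's kernel-localization argument for the upper bound would transfer to any function with the corresponding H\"older regularity.
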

\noindent {\bf Proof of the upper bound:}
Let us consider first the case $0 <\alpha \leq 1$.
We have
\begin{align*} |A_j(f_\alpha)(x) - f_\alpha(x)| & = | \int_{\bS^{d-1}}  A_j(x,y)f_\alpha(y) dy - f_\alpha(x)| \\ &= |\int_{\bS^{d}}  A_j(x,y) (f_\alpha(y)  - f_\alpha(x)) dy| \\ &\leq \int_{\bS^{d}}  |A_j(x,y)| |f_\alpha(y)  - f_\alpha(x)| dy
\end{align*}
But 
$$ \forall \theta, \theta' \in [0, \pi], \quad  |\sqrt{1-\cos \theta}- \sqrt{1-\cos \theta'}| = \sqrt 2 | \sin \frac{\theta}2- \sin \frac{\theta'}2|  \leq \frac 1{\sqrt 2} |\theta- \theta'|,$$
so
\begin{align*} |f_1(x)- f_1(y)| &= 
| \sqrt{  1-  \cos(d_{\bS^{d} }(x,x_0)) }- \sqrt{1-  \cos(d_{\bS^{d}} (y,x_0)) }| \\
&\leq \frac1{\sqrt{2}}  |  d_{\bS^{d}} (x,x_0)-d_{\bS^{d}} (y,x_0)| \leq 
\frac1{\sqrt{2}}d_{ \bS^{d} } (x,y)   \end{align*}
And, by the subadditivity of $x\mapsto x^{\alpha}$ for $0<\alpha \leq 1$
$$ |f_\alpha (x)- f_\alpha (y)| = |f_1^\alpha (x)- f_1^\alpha (y)| \leq  |f_1 (x)- f_1 (y)|^\alpha \leq
\frac 1{2^{\alpha/2}}(d_{\bS^{d}} (x,y))^\alpha $$
So, by the integration formula for zonal functions on the sphere (Section 9.1 in \cite{Faraut}):
$$ \forall x \in \bS^{d}, \quad \int_{\bS^{d}}  |A_j(x,y)| |f_\alpha(y)  - f_\alpha(x)| dy \leq 2^{-\alpha/2} \int_{\bS^{d}}  |A_j(\langle x, y\rangle_{d+1}))| (d_{\bS^{d}} (x,y))^\alpha dy $$
$$= 2^{-\alpha/2} |\bS^{d-1}| \int_0^\pi A_j(\cos \theta) \theta^\alpha
(\sin \theta )^{d-1} d\theta \leq 2^{-\alpha/2} |\bS^{d-1}|  \int_0^\pi A_j(\cos \theta) 
 \theta^{d-1+\alpha} d\theta $$
But using the concentration result \cite{pnarco} 
$$ \forall K>0, \exists \;  C_K <\infty , \quad  A_j(\cos \theta ) \leq C_K 2^{jd}[ 1\wedge 1/(2^j\theta)^K]$$ 
Taking $ K>d + \alpha $, we obtain
\begin{align*}\| A_j(f) -f\|_\infty &\leq 2^{-\alpha/2} |\bS^{d-1}|  C_K 2^{jd}\left(\int_0^{2^{-j}}  \theta^{d-1+\alpha} d\theta   + \int_{2^{-j}}^1  \theta^{d-1+\alpha}  \frac 1{(2^j\theta)^K}  d\theta \right) \\
&\leq 2^{-\alpha/2} |\bS^{d-1}| C_K 2^{-j\alpha } \frac{K}{(d+\alpha)(K-d-\alpha)}
\end{align*}

Let us now consider the case $\alpha >1$. Taking $d=2$ the previous proof shows that, on  the classical torus $\bT$
 , for $0 <\alpha \leq 1$
, the  $2\pi-$periodical function 
$ \phi_\alpha( \theta) =  (\sqrt{  1-  \cos \theta }  )^\alpha = 2^\alpha |\sin \frac \theta2|^\alpha $ belongs to $\mathcal C^\alpha(\bT)$. But, if for
$ k$ in $\bN,$ $\alpha$ equals $\alpha=k+ \beta  \leq k+1$, it is clear that $\phi_\alpha(\theta) $ is $k-$times differentiable, and $D^k\phi_\alpha(\theta)
  $ as a linear combination of $C_\infty $ periodical funtions times $|\sin \frac \theta2|^{\beta+j}, \; j=0,1,...,k$ belongs to $\mathcal C^\beta(\bT).$ So, $\phi_\alpha \in \mathcal C^\alpha(\bT),$ and, as  moreover $ \phi_\alpha( \theta)$ is even, there exists $ P_j(\cos \theta)$, a sequence of trigonometrical polynomials of degree
  less than $2^j$ such that :
 $$\|  (\sqrt{  1-  \cos \theta })^\alpha - P_j(\cos \theta)  \|_\infty  \leq C2^{-j\alpha }$$ 
  But $P_j(\cos \langle x, x_0\rangle_{d+1})$ is a polynomial on the sphere of degree less than $2^j$ and $$\|(\sqrt{  1-  \cos \langle x, x_0\rangle_{d+1} })^\alpha -P_j(\langle x, x_0\rangle_{d+1})  \|_\infty  \leq C2^{-j\alpha }.$$ 

\noindent{\bf Proof of the lower bound}
We only have to consider the case $j$ large enough since $f_\alpha$ is not a spherical polynomial and thus not in $E_N(\mathbb S^d)$ for any finite $N$. Using again the integration fomulae for zonal functions
\begin{align*}
 \| A_j(f_\alpha) -f_\alpha\|_\infty &\geq | A_j(f_\alpha)(x_0)  -f_\alpha(x_0)|   \\
 &=    | \int_{\bS^{d}}  A_j(x_0,y) (f_\alpha(y)  - f_\alpha(x_0) ) dy   |
 \\ & =    |\int_{\bS^{d}}  A_j(x_0,y)  (\sqrt{1- \langle y, x_0\rangle_{d+1}}  )^\alpha dy  |
 \\
&= |\bS^{d-1}   | \int_0^\pi A_j(\cos \theta) (\sqrt{1-\cos \theta})^\alpha
(\sin \theta )^{d-1} d\theta | 
\\ & =|\bS^{d-1}  | \int_I   A_j(t) (1-t )^{\alpha/2} (1-t^2)^{\nu-1/2} dt | 
\\
& = \frac{|\bS^{d-1}  |}{  |  \bS^{d}  |}    |  \sum_{0 \leq k  <2^j}  a( \frac{ k}{2^j} ) (1+\frac{ k}{\nu}  )\int_I  C^\nu_k(t) (1-t)^{\alpha/2} (1-t^2)^{\nu-1/2} dt | 
\end{align*}
But, using (\ref{OR})
   \begin{align*}\int_I  C^\nu_k(t) (1-t)^{\alpha/2} (1-t^2)^{\nu-1/2} dt &=\int_I  C^\nu_k(t) (1-t)^{\alpha/2} \omega^\nu(t) dt \\ & =(-1)^k \frac 1{k! 2^k} \frac{ (2\nu)_k}{ (\nu +\frac 12)_k} \int_I (1-t)^{\alpha/2}   D^k \{(1-t^2)^k \omega^\nu(t) \}dt\\
   &=\frac 1{k! 2^k} \frac{ (2\nu)_k}{ (\nu +\frac 12)_k} \int_I  D^k \{(1-t)^{\alpha/2}  \}  (1-t^2)^k \omega^\nu(t) dt  
   \\
   &=\frac 1{k! 2^k} \frac{ (2\nu)_k}{ (\nu +\frac 12)_k} (-\frac \alpha2)_k \int_I  (1-t)^{\alpha/2-k}   (1-t^2)^k \omega^\nu (t) dt  =  \\
   &=\frac 1{k! 2^k} \frac{ (2\nu)_k}{ (\nu +\frac 12)_k} (-\frac \alpha2)_k \int_I  (1-t)^{\alpha/2}   (1+t)^k \omega^\nu (t) dt  = u_k  
   \end{align*}
   Clearly  $ \forall k \geq 0, u_k \neq 0  $ (because $ \frac \alpha2 \not \in \bN) $, $u_k  = (-1)^k |u_k| $ for $ 0 \leq k < \frac \alpha2 + 1$  and $u_k  =- (-1)^{[\alpha/2]}  |u_k|$
for $ k> \frac \alpha2 +1$.  
By the upper bound, and for $j $ large enough : 
$$ C2^{-j} \geq \| A_jf_\alpha-f_\alpha\|_\infty \geq  \frac{|\bS^{d-1}  |}{  |  \bS^{d}  |}   |  \sum_{0\leq k<2^j}  a(\frac k{2^j} ) (1+\frac k\nu) u_k  |  $$
$$=  \frac{|\bS^{d-1}  |}{  |  \bS^{d}  |}   |  \sum_{0\leq k  < \alpha/2 +1}  a(\frac k{2^j} ) (1+\frac k\nu) (-1)^k |u_k| -  (-1)^{[\alpha/2]}    \sum_{ \alpha/2 +1< k<2^j}  a(\frac k{2^j} )(1+\frac k\nu) |u_k| \; | $$
$$=   \frac{|\bS^{d-1}  |}{  |  \bS^{d}  |}  |  \sum_{0\leq k \leq [\alpha/2]}   (1+\frac k\nu) u_k 
 -  (-1)^{[\alpha/2]}    \sum_{ \alpha/2 +1< k<2^j}  a(\frac k{2^j} ) (1+\frac k\nu) |u_k|\; |  $$
So if $[\alpha/2]$ is even, and $j $ large enough 
 $$|  \sum_{0\leq k<2^j}  a(\frac k{2^j} ) (1+\frac k\nu) u_k  | =  
 \sum_{0\leq k \leq [\alpha/2]}   (1+\frac k\nu) u_k
 -      \sum_{ \alpha/2 +1< k<2^j}  a(\frac k{2^j} ) (1+\frac k\nu) |u_k|  $$
 $$=  \sum_{0\leq k \leq [\alpha/2]}   (1+\frac k\nu) u_k +
      \sum_{ \alpha/2 +1< k<2^j}  a(\frac k{2^j} ) (1+\frac k\nu) u_k.  $$
      So
      $$  \sum_{0\leq k<2^{j}}   (1+\frac k\nu)u_k  \leq |  \sum_{0\leq k<2^j}  a(\frac k{2^j} ) (1+\frac k\nu) u_k  | \leq  \sum_{0\leq k<2^{j-1}}   (1+\frac k\nu)u_k.$$
Now, if $[\alpha/2]$ is odd, and $j $ large enough       
 $$|  \sum_{0\leq k<2^j}  a(\frac k{2^j} ) (1+\frac k\nu) u_k  | =  -(
 \sum_{0\leq k \leq [\alpha/2]}   (1+\frac k\nu) u_k
 +     \sum_{ \alpha/2 +1< k<2^j}  a(\frac k{2^j} ) (1+\frac k\nu) |u_k|)  $$
$$= -(
 \sum_{0\leq k \leq [\alpha/2]}   (1+\frac k\nu) u_k
 +     \sum_{ \alpha/2 +1< k<2^j}  a(\frac k{2^j} ) (1+\frac k\nu) u_k)  $$
 So
$$-  \sum_{0\leq k<2^{j}}   (1+\frac k\nu)u_k  \leq |  \sum_{0\leq k<2^j}  a(\frac k{2^j} ) (1+\frac k\nu) u_k  | \leq  -  \sum_{0\leq k<2^{j-1}}   (1+\frac k\nu)u_k,$$
and in any case
 $$|  \sum_{0\leq k<2^j}  a(\frac k{2^j} ) (1+\frac k\nu) u_k  | \sim | \sum_{0\leq k<2^j}   (1+\frac k\nu)\int_I  C^\nu_k(t) (1-t)^{\alpha/2} (1-t^2)^{\nu-1/2} dt | $$
  
Denote now by $\langle \cdot, \cdot \rangle_\nu$ the $L^2([-1,1])$-inner product w.r.t.~$\omega^\nu$  and recall (see \cite{Askey} p.343) $$ \sum_{0 \leq k \leq n}    (1+ \frac k\nu) C_k^\nu (x)  =\frac{  (n + 2\nu) C^\nu_n(x)  - (n+1) C_{n+1}^\nu(x) }{  2\nu (1-x)  }$$
so that
 $$ 2\nu   \langle  \sum_{0 \leq k \leq n}    (1+ \frac k\nu) C_k^\nu (x) ,  (1-x)^{\alpha/2}  \rangle_\nu  = (n + 2\nu)  \langle C^\nu_n (x), (1-x)^{\alpha/2-1} \rangle_\nu  -  (n+1) \langle C_{n+1}^\nu (x), (1-x)^{\alpha/2-1} \rangle_\nu  $$
 $$  \langle C^\nu_k (x), (1-x)^{\alpha/2} \rangle_\nu  = (-1)^k \frac 1{k! 2^k} \frac{ (2\nu)_k}{ (\nu +\frac 12)_k} \int_I (1-t)^{\alpha/2-1} D^k((1-t^2)^k \omega_\nu(t)) dt$$
 $$=   \frac 1{k! 2^k} \frac{ (2\nu)_k}{ (\nu +\frac 12)_k} (1-\frac \alpha2)_k \int_I (1-t)^{\alpha/2-1-k}  (1-t^2)^k (1-t^2)^{\nu-1/2} dt$$
  $$=   \frac 1{k! 2^k} \frac{\Gamma  (2\nu +k)}{\Gamma (2\nu )} \frac{\Gamma(\nu + \frac 12)}{\Gamma(\nu +k + \frac 12)}   \frac{ \Gamma (-\frac \alpha2+k +1)}{\Gamma  (1-\frac \alpha2 )} \int_I (1-t)^{ \nu +\alpha/2 -3/2}   (1+t)^{\nu-1/2 +k} dt$$
  
  $$=   \frac{\sin \frac{ \pi \alpha}2}\pi   \Gamma (\alpha/2) \frac 1{k! 2^k} \frac{\Gamma  (2\nu +k)}{\Gamma (2\nu )} \frac{\Gamma(\nu + \frac 12)}{\Gamma(\nu +k + \frac 12)}  
  \Gamma (-\frac \alpha2+k +1)
    2^{2\nu +k-1 +\frac \alpha2}  \frac{\Gamma(\nu+\frac \alpha2 -\frac 12)  \Gamma (\nu +k +\frac 12)}{\Gamma (2\nu +k +\frac \alpha2)}$$
    
    $$=  \frac{ 2^{\alpha/2} \sin( \frac{ \pi \alpha}2)  \Gamma(\nu+\frac \alpha2 -\frac 12)  \Gamma (\alpha/2)}{\Gamma(\nu) \sqrt \pi  }  
     \frac{   \Gamma ( k+1-\frac \alpha2) \Gamma  (2\nu +k)  }{ k! \Gamma (2\nu +k +\frac \alpha2)}.$$
Using the following standard formulaes 
  $$ \Gamma (1- \alpha/2) \Gamma (\alpha/2) = \frac{\pi}{\sin \pi \alpha/2} \; ; \quad  \Gamma(2\nu) \sqrt \pi = 2^{2\nu-1} \Gamma ( \nu) \Gamma(\nu+ 1/2). $$ We we deduce
    $$  \langle  \sum_{0 \leq k \leq n}    (1+ \frac k\nu) C_k^\nu (x) ,  (1-x)^{\alpha/2}  \rangle_\nu  =$$
  $$=\frac{ 2^{\alpha/2} \sin( \frac{ \pi \alpha}2)  \Gamma(\nu+\frac \alpha2 -\frac 12)  \Gamma (\alpha/2)}{ 2\nu \Gamma(\nu) \sqrt \pi  } 
    \frac 1{n!}.  \frac{  (n+2\nu)   \Gamma ( n+1-\frac \alpha2) \Gamma  (2\nu +n)  }{  \Gamma (2\nu +n +\frac \alpha2) } \{1- \frac{  ( n+1-\frac \alpha2)}{(2\nu +n +\frac \alpha2)} \}$$
     $$=\frac{(2\nu -1+ \alpha) 2^{\alpha/2} \sin( \frac{ \pi \alpha}2)  \Gamma(\nu+\frac \alpha2 -\frac 12)  \Gamma (\alpha/2)}{ 2\nu \Gamma(\nu) \sqrt \pi  }
      \frac{  (n+2\nu)   \Gamma ( n+1-\frac \alpha2) \Gamma  (2\nu +n)  }{(n+ 2\nu +\alpha/2 )n! \Gamma (2\nu +n +\frac \alpha2) }$$
      $$ \sin( \frac{ \pi \alpha}2)  C(\alpha, \nu)\frac{(n+ 2\nu)}{(n+ 2\nu +\alpha/2 )}
       \frac{    \Gamma ( n+1-\frac \alpha2) \Gamma  (2\nu +n)  }{n! \Gamma (2\nu +n +\frac \alpha2) }$$
       Clearly $\sin( \frac{ \pi \alpha}2) $ determines the sign, and by Stirling's formula :
       $$ \frac{    \Gamma ( n+1-\frac \alpha2) \Gamma  (2\nu +n)  }{n! \Gamma (2\nu +n +\frac \alpha2) }  \sim n^{-\alpha}$$
So the lower bound of $ \|A_j(f_\alpha) -f_\alpha \|_\infty$ is of order $2^{-j\alpha}$. \newline

\textit{Acknowledgement.} The second author gratefully acknowledges the hospitality of the Caf\'{e} Br\"{a}unerhof in Vienna.
        
\bibliography{CB}
 \bibliographystyle{abbrvnat}

\bigskip

\noindent \textsc{Universit\'{e} Paris-Diderot} \\
\textsc{Laboratoire de Probabilit\'{e}s et Mod\`{e}les Al\'{e}atoires} \\
\textsc{175, Rue de Chevaleret, 75013 Paris, France} \\
\textsc{Email}: kerk@math.jussieu.fr, picard@math.jussieu.fr \\

\noindent \textsc{University of Cambridge} \\
\textsc{Statistical Laboratory} \\
\textsc{Center for Mathematical Sciences, CB3 0WB, Cambridge, UK} \\
\textsc{Email}: r.nickl@statslab.cam.ac.uk

\end{document}